\newcommand{\from}{\mathpunct{:}}
\newcommand{\RR}{\mathbb{R}}
\newcommand{\NN}{\mathbb{N}}
\newcommand{\CC}{\mathbb{C}}
\newcommand{\KK}{\mathbb{K}}
\newcommand{\id}{\mathrm{id}}
\newcommand{\Hopf}{\mathcal{H}}
\newcommand{\one}{\mathbbm{1}}
\newcommand{\tensor}{\otimes}
\newcommand{\Alg}{\mathrm{Alg}}
 \newcommand{\trees}{\mathcal{T}}
\newcommand{\chG}[2]{\mathcal{G}(#1, #2)}
\newcommand{\chA}[2]{\mathfrak{g}(#1, #2)}
   \newcommand{\td}{\diamond}
\newcommand{\Lf}{\ensuremath{\mathbf{L}}}
\newcommand{\LB}[1][\cdot \hspace{1pt} , \cdot]{[\hspace{1pt} #1 \hspace{1pt} ]}
\newcommand{\Frechet}{Fr\'echet }
\newcommand{\norm}[1]{\left\lVert #1 \right\rVert}
\DeclareMathOperator{\Hom}{Hom}
\spnewtheorem{prop}[theorem]{Proposition}{\bfseries}{\itshape}
\spnewtheorem{lem}[theorem]{Lemma}{\bfseries}{\itshape}
\spnewtheorem{cor}[theorem]{Corollary}{\bfseries}{\itshape}
\spnewtheorem{ex}[theorem]{Example}{\bfseries}{\rmfamily}
\spnewtheorem{thm}[theorem]{Theorem}{\bfseries}{\itshape}
\spnewtheorem{defn}[theorem]{Definition}{\bfseries}{\rmfamily}
\spnewtheorem{rem}[theorem]{Remark}{\itshape}{\rmfamily}
\spnewtheorem{setup}{\nocaption}[section]{\bfseries}{\rmfamily}
\tikzstyle dtree=[grow'=up,sibling distance=2mm,level distance=2mm,thick]
\tikzstyle dtree node=[scale=0.3,shape=circle,very thin,draw]
\tikzstyle dtree black node=[style=dtree node,fill=black]
\newcommand{\onenode}{
  \begin{tikzpicture}[dtree]
    \node[dtree black node] {}
    ;
  \end{tikzpicture}
}
\newcommand{\twonode}{
\begin{tikzpicture}[dtree]
  \node[dtree black node] {}
  child { node[dtree black node] {} }
  ;
\end{tikzpicture}
}
\begin{document}

\title*{The geometry of characters of Hopf algebras}
\author{Geir Bogfjellmo and Alexander Schmeding}
\institute{Geir Bogfjellmo \at Chalmers tekniska h\"ogskola och G\"oteborgs universitet, Matematiska vetenskaper,
412 96 G\"oteborg, Sweden \email{geir.bogfjellmo@chalmers.se}
\and Alexander Schmeding \at NTNU Trondheim, Institutt for matematiske fag, 7491 Trondheim, Norway \email{alexander.schmeding@math.ntnu.no}}
\maketitle

\abstract{
Character groups of Hopf algebras can be used to conveniently describe several species of ``series expansions'' such as ordinary Taylor series, B-series, arising in the study of ordinary differential equations, Fliess series, arising from
control theory and rough paths.
These ideas are a fundamental link connecting Hopf algebras and their character groups to the topics of the Abelsymposium 2016 on ``Computation and Combinatorics in Dynamics, Stochastics and Control''.
In this note we will explain some of these connections, review constructions for Lie group and topological structures for character groups and provide some new results for character groups.
Our main result is the construction and study of Lie group structures for Hopf algebras which are graded but not necessarily connected (in the sense that the degree zero subalgebra is finite-dimensional).}\medskip

\keywords{infinite-dimensional Lie group, pro-Lie group, character group of a Hopf algebra, regularity of Lie groups, continuous inverse algebra\\[1em]
\textbf{MSC2010:} 22E65 (primary); %Infinite-dim Lie gps
16T05, %Hopf Algebras and Application
43A40, %Character groups and dual objects (from harmonic analysis category)
58B25 %Group structures and generalizations on infinite-dimensional manifolds  
 (Secondary)}\medskip
 
\setcounter{minitocdepth}{1}
\dominitoc
\newpage

Character groups of Hopf algebras appear in a variety of mathematical contexts.
For example, they arise in non-commutative geometry, renormalisation of quantum field theory \cite{MR2371808}, numerical analysis \cite{Brouder-04-BIT} and the theory of regularity structures for stochastic partial differential equations \cite{MR3274562}.
A Hopf algebra is a structure that is simultaneously a (unital, associative) algebra, and a (counital, coassociative) coalgebra that is also equipped with an antiautomorphism known as the antipode, satisfying a certain property.
In the contexts of these applications, the Hopf algebras often encode combinatorial structures and serve as a bookkeeping device.

Several species of ``series expansions'' can then be described as algebra morphisms from a Hopf algebra $\Hopf$ to a commutative algebra $B$.
Examples include ordinary Taylor series, B-series, arising in the study of ordinary differential equations, Fliess series, arising from
control theory and rough paths, arising in the theory of stochastic ordinary equations and partial differential equations.
An important fact about such algebraic objects is that, if $B$ is commutative, the set of algebra morphisms $\Alg(\Hopf, B)$, also called \emph{characters}, forms a group with product given by convolution
\[a\ast b = m_B \circ (a\tensor b)\circ \Delta_\Hopf.
\]
These ideas are the fundamental link connecting Hopf algebras and their character groups to the topics of the Abelsymposium 2016 on ``Computation and Combinatorics in Dynamics, Stochastics and Control''.
In this note we will explain some of these connections, review constructions for Lie group and topological structures for character groups and provide some new results for character groups.

Topological and manifold structures on these groups are important to applications in the various fields outlined above.
In many places in the literature the character group is viewed as ``an infinite dimensional Lie group'' and one is interested in solving differential equations on these infinite-dimensional spaces (we refer to \cite{BDS17} for a more detailed discussion and further references).
This is due to the fact that the character group admits an associated Lie algebra, the Lie algebra of infinitesimal characters\footnote{Note that this Lie algebra is precisely the one appearing in the famous Milnor-Moore theorem in Hopf algebra theory \cite{MR0174052}.}
$$\chA{\Hopf}{B} := \{\phi \in \text{Hom}_\KK (\Hopf,B) \mid \phi (xy) = \phi (x) \varepsilon_\Hopf (y) + \varepsilon_\Hopf (x) \phi (y) , \ \forall x,y \in \Hopf \},$$
whose Lie bracket is given by the commutator bracket with respect to convolution.
As was shown in \cite{BDS16}, character groups of a large class of Hopf algebras are infinite-dimensional Lie groups. Note however, that in ibid.\ it was also shown that not every character group can be endowed with an infinite-dimensional Lie group structure.
In this note we extend these results to a larger class Hopf algebras. To this end, recall that a topological algebra is a \emph{continuous inverse algebra} (or CIA for short) if the set of invertible elements is open and inversion is continuous on this set (e.g.\ a Banach algebra). Then we prove the following theorem.
\smallskip

\textbf{Theorem A} \emph{Let $\Hopf = \oplus_{n \in \NN_0} \Hopf_n$ be a graded Hopf algebra such that $\mathrm{dim}\ \Hopf_0 < \infty$ and $B$ be a commutative CIA. Then $\chG{\Hopf}{B}$ is an infinite-dimensional Lie group whose Lie algebra is $\chA{\Hopf}{B}$.}\smallskip

As already mentioned, in applications one is interested in solving differential equations on character groups (see e.g.\ \cite{MR3485151} and compare \cite{BDS17}).
These differential equations turn out to be a special class of equations appearing in infinite-dimensional Lie theory in the guise of regularity for Lie groups.
To understand this and our results, we recall this concept now for the readers convenience.
\smallskip

\textbf{Regularity (in the sense of Milnor)} 
Let $G$ be a Lie group modelled on a locally convex space, with identity element $e$, and
 $r\in \NN_0\cup\{\infty\}$. We use the tangent map of the left translation
 $\lambda_g\colon G\to G$, $x\mapsto gx$ by $g\in G$ to define
 $g.v:= T_{e} \lambda_g(v) \in T_g G$ for $v\in T_{e} (G) =: \Lf(G)$.
 Following \cite{1208.0715v3}, $G$ is called
 \emph{$C^r$-semiregular} if for each $C^r$-curve
 $\gamma\colon [0,1]\rightarrow \Lf(G)$ the initial value problem
 \begin{displaymath}
  \begin{cases}
   \eta'(t)&= \eta (t). \gamma(t)\\ \eta(0) &= e
  \end{cases}
 \end{displaymath}
 has a (necessarily unique) $C^{r+1}$-solution $\text{Evol} (\gamma):=\eta\colon [0,1]\rightarrow G$.
 If furthermore the map
 \begin{displaymath}
  \text{evol} \colon C^r([0,1],\Lf(G))\rightarrow G,\quad \gamma\mapsto \text{Evol}
  (\gamma)(1)
 \end{displaymath}
 is smooth, $G$ is called \emph{$C^r$-regular}.\footnote{Here we consider $C^r([0,1],\Lf(G))$ as a locally convex vector space with the pointwise operations and the topology of uniform convergence of the function and its derivatives on compact sets.} If $G$ is $C^r$-regular and $r\leq s$, then $G$ is also
 $C^s$-regular. A $C^\infty$-regular Lie group $G$ is called \emph{regular}
 \emph{(in the sense of Milnor}) -- a property first defined in \cite{MR830252}.
 Every finite-dimensional Lie group is $C^0$-regular (cf.\ \cite{MR2261066}).
 In the context of this paper our results on regularity for character groups of Hopf algebras subsume the following theorem.
\smallskip

\textbf{Theorem B} \emph{Let $\Hopf = \oplus_{n \in \NN_0} \Hopf_n$ be a graded Hopf algebra such that $\mathrm{dim }\ \Hopf_0 < \infty$ and $B$ be a sequentially complete commutative CIA. Then $\chG{\Hopf}{B}$ is $C^0$-regular.}\smallskip

 Recently, also an even stronger notion regularity called measurable regularity has been considered \cite{1601.02568v1}.
 For a Lie group this stronger condition induces many Lie theoretic properties (e.g.\ validity of the Trotter product formula).
 In this setting, $L^1$-regularity means that one can solve the above differential equations for absolutely continuous functions (whose derivatives are $L^1$-functions).
 A detailed discussion of these properties can be found in \cite{1601.02568v1}. However, we will sketch in Remark \ref{rem: L1reg} a proof for the following proposition.\smallskip

\textbf{Proposition C}  \emph{Let $\Hopf = \oplus_{n \in \NN_0} \Hopf_n$ be a graded Hopf algebra with $\mathrm{dim}\ \Hopf_0 < \infty$ which is of countable dimension, e.g.\ $\Hopf$ is of finite type. Then for any commutative Banach algebra $B$, the group $\chG{\Hopf}{B}$ is $L^1$-regular.}\smallskip

One example of a Hopf algebra whose group of characters represent a series expansion is the Connes--Kreimer Hopf algebra or Hopf algebra of rooted trees $\Hopf_{\mathrm{CK}}$.

Brouder \cite{Brouder-04-BIT} established a very concrete link between $\Hopf_{\mathrm{CK}}$ and B-series.
B-series, due to Butcher \cite{Butcher72}, constitute an algebraic structure for the study of integrators for ordinary differential equations. In this context, the group of characters $\chG{\Hopf_{\mathrm{CK}}}{\RR}$ is known as the Butcher group.
The original idea was to isolate the numerical integrator from the concrete differential equation, and even from the surrounding space (assuming only that it is affine), thus enabling a study of the integrator \emph{an sich}.
\smallskip

Another example connecting character groups to series expansions arises in the theory of regularity structures for stochastic partial differential equations (SPDEs) \cite{MR3274562,1610.08468v1}.
In this theory one studies singular SPDEs, such as the continuous parabolic Anderson model (PAM, cf.\ the monograph \cite{MR3526112}) formally given by
$$ \left(\frac{\partial}{\partial t} - \Delta\right) u(t,x) = u(t,x) \zeta(x) \quad (t,x) \in ]0,\infty[ \times \RR^2, \quad \zeta\  \text{spatial white noise}.$$
We remark that due to the distributional nature of the noise, the product and thus the equation is ill-posed in the classical sense (see \cite[p.5]{MR3274562}).
To make sense of the equation, one wants to describe a potential solution by ``local Taylor expansion'' with respect to reference objects built from the noise terms.
The analysis of this ``Taylor expansion'' is quite involved, since products of noise terms are not defined.
However, it is possible to obtains Hopf algebras which describe the combinatorics involved. Their $\RR$-valued character group $\mathcal{G}$ is then part of a so called regularity structure $(\mathcal{A}, \mathcal{T}, \mathcal{G})$ (\cite[Definition 5.1]{1610.08468v1}) used in the renormalisation of the singular SPDE. See Example \ref{ex: Hairer} for a discussion of the Lie group structure for these groups.

\section{Foundations: Character groups and graded algebra}

In this section we recall basic concepts and explain the notation used throughout the article.
Whenever in the following we talk about algebras (or coalgebras or bialgebras) we will assume that the algebra (coalgebra, bialgebra) is unital (counital or unital and counital in the bialgebra case). 
Further $\KK \in \{\RR, \CC\}$ will always denote either the field of real or complex numbers (though many of the following concepts make sense over general base fields).

\begin{defn}
A \emph{Hopf algebra} (over $\KK$) $\Hopf$ is a $\KK-$bialgebra $(\Hopf, m,\one_\Hopf, \Delta, \varepsilon)$ equipped with an antiautomorphism $S$, called the \emph{antipode}, such that the diagram
\[
\begin{tikzcd}[row sep=4em]
& H\otimes H \arrow[rr,"S\otimes\id"] && H\otimes H \arrow[dr,"m"] \\
H \arrow[ur,"\Delta"] \arrow[rr,"\varepsilon"] \arrow[dr,"\Delta"'] && K \arrow[rr,"u"] && H \\
& H\otimes H \arrow[rr,"\id\otimes S"'] && H\otimes H \arrow[ur,"m"']
\end{tikzcd}
\]
commutes.

In the diagram $u \colon \KK \rightarrow \Hopf, k \mapsto k\one_\Hopf$ is the unit map of $\Hopf$, i.e.\ the map which sends scalars to multiples of the unit $\one_\Hopf \in \Hopf$. We refer to \cite{MR1381692,MR2290769,manchon,Swe69} for basic information on Hopf algebras.
%\[
%m \circ (S \tensor \id) \circ \Delta = u_\Hopf\circ \epsilon = m \circ (\id \tensor S) \circ \Delta,
%\]
\end{defn}

Let $B$ be a commutative algebra. The set of linear maps $\Hom_\KK (\Hopf, B)$ forms a new algebra with the convolution product
\[
\phi\star\psi = m_B \circ (\phi\tensor \psi) \Delta_\Hopf,
\]
and unit $u_B \circ \varepsilon_\Hopf$ (where $u_B$ is the unit map of $B$).

Recall that the invertible elements or \emph{units} of an algebra $A$ form a group, which we denote $A^\times$.

\begin{defn}\label{defn: char}
A linear map
$\phi \from \Hopf \to B$ is called
\begin{enumerate}
\item a ($B$-valued) \emph{character} if $\phi(ab) = \phi(a)\phi(b)$ for all $a,b\in \Hopf$.
        The set of all characters is denoted $\chG{\Hopf}{B}$.
\item a ($B$-valued) \emph{infinitesimal character} if $\phi(ab) = \varepsilon_\Hopf(b) \phi(a)  + \varepsilon_\Hopf(a) \phi(b)$ for all $a,b\in \Hopf$.
    The set of all infinitesimal characters is denoted $\chA{\Hopf}{B}$.
\end{enumerate}
\end{defn}

\begin{lem}[{\cite[Proposition 21 and 22]{manchon}}]
\begin{enumerate}
\item $\chG{\Hopf}{B}$ is a subgroup of the group of units $\Hom_\KK (\Hopf, B)^\times$. On $\chG{\Hopf}{B}$, the inverse is given by
    \[\phi^{\star-1}= \phi \circ S
    \]
\item $\chA{\Hopf}{B}$ is a Lie subalgebra of the commutator Lie algebra $\Hom_\KK (\Hopf, B), [\cdot, \cdot]$, where the bracket is given by
\[[\phi, \psi] = \phi\star \psi - \psi \star \phi
\]
\end{enumerate}
\label{lem: LGLA}
\end{lem}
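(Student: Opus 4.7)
The plan is to verify everything in Sweedler notation $\Delta_\Hopf(a) = \sum a_{(1)} \otimes a_{(2)}$, using that $\Delta_\Hopf$ is an algebra homomorphism, that $\varepsilon_\Hopf$ is an algebra homomorphism, that the antipode axiom gives $\sum S(a_{(1)}) a_{(2)} = \sum a_{(1)} S(a_{(2)}) = \varepsilon_\Hopf(a)\one_\Hopf$, and crucially that $B$ is commutative.

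For part (1), I would first check that $u_B \circ \varepsilon_\Hopf$ is a character (immediate from $\varepsilon_\Hopf$ being an algebra morphism) and that the convolution of two characters $\phi,\psi$ is again a character: expanding
\begin{equation*}
(\phi\star\psi)(ab) = \sum \phi(a_{(1)}b_{(1)})\psi(a_{(2)}b_{(2)}) = \sum \phi(a_{(1)})\phi(b_{(1)})\psi(a_{(2)})\psi(b_{(2)}),
\end{equation*}
and then using commutativity of $B$ to swap $\phi(b_{(1)})$ and $\psi(a_{(2)})$, yielding $(\phi\star\psi)(a)(\phi\star\psi)(b)$. For the inverse, I would compute, using that $\phi$ is multiplicative,
\begin{equation*}
(\phi \star (\phi\circ S))(a) = \sum \phi(a_{(1)})\phi(S(a_{(2)})) = \phi\!\left(\sum a_{(1)} S(a_{(2)})\right) = \varepsilon_\Hopf(a)\phi(\one_\Hopf) = (u_B\circ\varepsilon_\Hopf)(a),
\end{equation*}
and symmetrically for $(\phi\circ S)\star \phi$, giving the formula $\phi^{\star -1} = \phi\circ S$ and, in particular, membership in $\Hom_\KK(\Hopf,B)^\times$.

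For part (2), the nontrivial step is to show the commutator of two infinitesimal characters is again infinitesimal. I would apply both infinitesimal-character identities in the expansion
\begin{equation*}
(\phi\star\psi)(ab) = \sum \bigl(\varepsilon_\Hopf(b_{(1)})\phi(a_{(1)})+\varepsilon_\Hopf(a_{(1)})\phi(b_{(1)})\bigr)\bigl(\varepsilon_\Hopf(b_{(2)})\psi(a_{(2)})+\varepsilon_\Hopf(a_{(2)})\psi(b_{(2)})\bigr),
\end{equation*}
and then collapse the four resulting sums via the counit identities $\sum \varepsilon_\Hopf(a_{(1)})\varepsilon_\Hopf(a_{(2)}) = \varepsilon_\Hopf(a)$ and $\sum \varepsilon_\Hopf(a_{(2)})\phi(a_{(1)}) = \phi(a)$. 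This produces
\begin{equation*}
(\phi\star\psi)(ab) = \varepsilon_\Hopf(b)(\phi\star\psi)(a) + \varepsilon_\Hopf(a)(\phi\star\psi)(b) + \phi(a)\psi(b) + \phi(b)\psi(a),
\end{equation*}
with the analogous formula for $(\psi\star\phi)(ab)$. The main obstacle, or rather the point where everything turns on a single observation, is that subtracting these two formulas the cross terms cancel only because $B$ is commutative: $\phi(a)\psi(b)+\phi(b)\psi(a) = \psi(a)\phi(b)+\psi(b)\phi(a)$. What remains is exactly the Leibniz identity $[\phi,\psi](ab) = \varepsilon_\Hopf(b)[\phi,\psi](a) + \varepsilon_\Hopf(a)[\phi,\psi](b)$, so $\chA{\Hopf}{B}$ is closed under the commutator bracket, hence a Lie subalgebra of $\Hom_\KK(\Hopf,B)$.
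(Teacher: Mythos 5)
The paper does not actually prove this lemma itself --- it is quoted from Manchon --- and your Sweedler-notation verification is precisely the standard argument given in that reference; all of your computations check out, including the correct identification of where commutativity of $B$ is needed in both parts. The one step you leave implicit in part (1) is closure of $\chG{\Hopf}{B}$ under inversion: knowing that $\phi$ is invertible in $\Hom_\KK(\Hopf,B)^\times$ with inverse $\phi\circ S$ does not by itself make the characters a subgroup (a submonoid of a group all of whose elements are invertible in the ambient group need not be a subgroup), so you should add the one-line check that $\phi\circ S$ is again a character, which follows from the antipode being an antihomomorphism, $S(ab)=S(b)S(a)$, together with commutativity of $B$. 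With that remark added, the proof is complete and follows the same route as the cited source.
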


An algebraic property of characters and infinitesimal characters is that the algebraic exponential
\[
\exp^{\star}(\phi) = \sum_{n=0}^{\infty} \frac{1}{n!} \phi^{\star n}
\]
is a map from  $\chA{\Hopf}{B}$ to $\chG{\Hopf}{B}$. \cite[Proposition 22]{manchon}.

In order to study the topological aspects of characters and infinitesimal characters of Hopf algebras, we need to assume at this step that $B$ is a topological algebra, i.e., that $B$ is topological vector space and that the product in $B$ is a continuous bilinear function
\[ \mu_{B} \from B \times B \to B
\]
We can then endow the space $\Hom_{\KK}(\Hopf, B)$ with the \emph{topology of pointwise convergence}.
The sets $\chA{\Hopf}{B}$ and $\chG{\Hopf}{B}$ are then closed subsets of $\Hom_{\KK}(\Hopf, B)$, and carry the induced topologies.

\begin{prop}\label{prop: topgp}
Let $\Hopf$ be a Hopf algebra, and $B$ a commutative, topological algebra. Endow $\Hom_{\KK}(\Hopf, B)$ with the topology of pointwise convergence. Then
\begin{itemize}
\item $(\Hom_{\KK}(\Hopf, B), \star)$ is a topological algebra,
\item $\chG{\Hopf}{B}$ is a topological group,
\item $\chA{\Hopf}{B}$ is a topological Lie algebra.
\end{itemize}
\end{prop}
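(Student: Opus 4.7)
The plan is to reduce everything to continuity of evaluations $\ev_h \from \Hom_\KK(\Hopf,B)\to B$, $\phi\mapsto\phi(h)$, since the topology of pointwise convergence is precisely the initial topology with respect to the family $\{\ev_h\}_{h\in\Hopf}$. In particular, a map with codomain $\Hom_\KK(\Hopf,B)$ is continuous iff its composition with every $\ev_h$ is continuous, and addition on $\Hom_\KK(\Hopf,B)$ is continuous because it is so componentwise.

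The main step is continuity of the convolution product. Fix $h\in\Hopf$ and write $\Delta h=\sum_{(h)}h_{(1)}\tensor h_{(2)}$ in Sweedler notation, the sum being finite. Then
\[
\ev_h(\phi\star\psi)=\sum_{(h)}\mu_B\bigl(\phi(h_{(1)}),\psi(h_{(2)})\bigr)=\sum_{(h)}\mu_B\circ\bigl(\ev_{h_{(1)}}\times\ev_{h_{(2)}}\bigr)(\phi,\psi).
\]
Each summand is the composition of the continuous bilinear multiplication $\mu_B$ with the continuous pair $(\ev_{h_{(1)}},\ev_{h_{(2)}})$; a finite sum of continuous $B$-valued maps is continuous, so $\ev_h\circ\star$ is continuous for every $h$, hence $\star$ itself is. This, combined with bilinearity of $\star$ and joint continuity of addition, gives the first bullet.

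For the second bullet, $\chG{\Hopf}{B}$ is a subgroup by Lemma \ref{lem: LGLA} and inherits the subspace topology, so the restriction of $\star$ is continuous. Inversion on the character group is $\phi\mapsto\phi\circ S$; since $\ev_h(\phi\circ S)=\ev_{S(h)}(\phi)$, it is the restriction to $\chG{\Hopf}{B}$ of the continuous linear map $(\cdot)\circ S\from\Hom_\KK(\Hopf,B)\to\Hom_\KK(\Hopf,B)$. For the third bullet, $\chA{\Hopf}{B}$ is a Lie subalgebra of $(\Hom_\KK(\Hopf,B),[\cdot,\cdot])$ by the same lemma, and $[\phi,\psi]=\phi\star\psi-\psi\star\phi$ is continuous as a composition of the continuous convolution, the continuous flip, and continuous subtraction; restricting to the closed subspace $\chA{\Hopf}{B}$ preserves continuity.

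The only real obstacle is the continuity of $\star$, and even that is immediate once one recognises that Sweedler's notation packages finitely many terms, so no approximation argument is needed; the rest is bookkeeping with the universal property of the product topology and the two identifications furnished by Lemma \ref{lem: LGLA}.
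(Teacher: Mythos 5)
Your proof is correct and follows essentially the same route as the paper: reduce everything to continuity of $\star$, which is checked by evaluating at each $h\in\Hopf$ and using that the Sweedler sum is finite and multiplication in $B$ is continuous. You are in fact slightly more thorough than the paper, which leaves the continuity of inversion $\phi\mapsto\phi\circ S$ and of the bracket implicit, whereas you spell both out.
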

\begin{proof} It is sufficient to prove that $\star$ is continuous. Since $\Hom_{\KK}(\Hopf, B)$ is endowed with the topology of pointwise convergence, it suffices to test convergence when evaluating at an element $h\in \Hopf$. Using Sweedler notation, we get $\phi\star \psi(h) = \sum_{(h)} \phi (h_{(1)}) \star \psi(h_{(2)})$ where the multiplication is carried out in $B$. As point evaluations are continuous on $\Hom_{\KK}(\Hopf, B)$, and multiplication is continuous in $B$, $\star$ is continuous.
\end{proof}

The definition of $\star$ does not depend on the algebra structure of $\Hopf$, only the coalgebra structure.
We therefore get as a corollary:
\begin{cor}
Let $C$ be a coalgebra, and $B$ a commutative, topological algebra. Then $(\Hom_{\KK}(C, B), \star)$, equipped with the topology of pointwise convergence, is a topological algebra.
\end{cor}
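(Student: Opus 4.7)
The plan is to observe that nothing in the construction of the convolution product, nor in the continuity argument given for Proposition~\ref{prop: topgp}, actually used the algebra structure on $\Hopf$; every ingredient involved only the coproduct and counit. So the proof of the corollary should essentially recycle that of the proposition after replacing $\Hopf$ by $C$.

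Concretely, I would proceed in three short steps. First, I would check that $\star$ is a well-defined associative product on $\Hom_\KK(C,B)$ with unit $u_B \circ \varepsilon_C$. Well-definedness is immediate from the formula $\phi \star \psi = m_B \circ (\phi \tensor \psi) \circ \Delta_C$; associativity follows from coassociativity of $\Delta_C$ combined with associativity of $m_B$ (a standard diagram chase), and the unit property from the counit axiom for $\varepsilon_C$ together with unitality of $B$. Commutativity of $B$ is not needed here, but it is in the hypothesis anyway and is harmless. Second, I would verify that $\Hom_\KK(C,B)$ with the topology of pointwise convergence is a topological vector space: this topology is the subspace topology inherited from the product topology on $B^C$, and both addition and scalar multiplication on $B^C$ are continuous because they are continuous on $B$ in each coordinate. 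Third, I would prove continuity of $\star$ exactly as in Proposition~\ref{prop: topgp}: for a fixed $h \in C$, using Sweedler notation $\Delta_C(h) = \sum_{(h)} h_{(1)} \tensor h_{(2)}$, the evaluation
\[
(\phi,\psi) \longmapsto (\phi \star \psi)(h) = \sum_{(h)} \mu_B\bigl(\phi(h_{(1)}), \psi(h_{(2)})\bigr)
\]
is a finite sum of compositions of the continuous point-evaluation maps $\ev_{h_{(i)}}$ with the continuous bilinear product $\mu_B$, hence continuous. Since pointwise convergence is exactly convergence of all such evaluations, $\star$ is continuous on $\Hom_\KK(C,B) \times \Hom_\KK(C,B)$.

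There is no real obstacle: the content of the corollary is precisely that the proof of Proposition~\ref{prop: topgp} never used the product $m_\Hopf$ or the unit $\one_\Hopf$ of $\Hopf$, only $\Delta_\Hopf$ and $\varepsilon_\Hopf$. The only thing to be slightly careful about is to note that the topology of pointwise convergence is well defined on $\Hom_\KK(C,B)$ without any further structure on $C$ (one just views $C$ as a set indexing the product $B^C$), so the setup transfers verbatim from the Hopf algebra case to the coalgebra case.
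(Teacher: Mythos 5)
Your proposal is correct and is essentially the paper's own argument: the corollary is obtained by observing that the proof of Proposition~\ref{prop: topgp} uses only the coproduct and counit, never the product or unit of $\Hopf$, so it applies verbatim with $\Hopf$ replaced by a coalgebra $C$. The extra details you supply (associativity via coassociativity, the unit $u_B\circ\varepsilon_C$, and the topological vector space structure on $B^C$) are all sound and merely make explicit what the paper leaves implicit.
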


In Section \ref{sec: geometry} we will be able to state more about the topology and geometry of groups of characters, under further assumptions on $\Hopf$ and $B$.
In particular, we are interested in cases where $\chG{\Hopf}{B}$ is an (infinite dimensional) Lie group, or a projective limit of finite dimensional Lie groups, i.e. a \emph{pro-Lie} group.
Both of these classes of topological groups can to some extent claim to be the generalization of finite dimensional Lie groups, and have been studied extensively for this reason (see e.g.\ \cite{HMax,MR2261066,MR2337107}).

For many arguments later on gradings will be important. We recall the following basic definitions
\begin{defn}\label{setup: graded}
 Recall that a graded Hopf algebra $\Hopf = \bigoplus_{n \in \NN_0} \Hopf_0$ is a Hopf algebra together with a grading as algebra and coalgebra (i.e.\ $\Hopf_n \cdot \Hopf_m \subseteq \Hopf_{n+m}$ and $\Delta (\Hopf_n) \subseteq \bigoplus_{k+l =n} \Hopf_k \otimes \Hopf_l$). In particular, $\Hopf_0$ becomes a Hopf subalgebra of $\Hopf$.

 Note that for a graded Hopf algebra $\Hopf$, identifying a mapping $f \colon \Hopf \rightarrow B$ with its components on the grading induces a natural isomorphisms of topological vector spaces (with respect to the topologies of pointwise convergence)
  \begin{displaymath}
   \Hom_\KK(\Hopf, B) = \Hom_\KK (\bigoplus_{n \in \NN_0} \Hopf_n , B) \cong \prod_{n \in \NN_0} \Hom_{\KK} (\Hopf_n , B).
  \end{displaymath}
 Hence $A = \Hom_{\KK} (\Hopf, B)$ becomes a densely graded topological vector space (see \cite[Appendix B]{BDS16}).
 We denote by $A_n := \Hom_\KK (\Hopf_n, B)$ the parts of the dense grading.
 Note that $A_0$ becomes a locally convex subalgebra of $A$ by definition of the grading.
\end{defn}

\section{Geometry of groups of characters}
\label{sec: geometry}
In this section, we review results on geometry, topology and Lie theory for character groups of Hopf algebras $\chG{\Hopf}{B}$.
Further, in Section \ref{sect: noncon} we prove a new result which establishes a Lie group structure for character groups of non-connected Hopf algebras.

In general, the existence of a Lie group structure on the character group of a Hopf algebra $\Hopf$ depends on structural properties of the underlying Hopf algebra (e.g.\ we need graded and connected Hopf algebras), the table below provides an overview of the topological and differentiable structures related to these additional assumptions.

\begin{figure}\label{fig1}
\begin{center}
 \begin{tabular}{|c|c|c|}
 \hline
 Hopf algebra $\Hopf$ & commutative algebra $B$ & Structure on $\chG{\Hopf}{B}$ \\
 \hline
 arbitrary            & weakly complete & pro-Lie group (cf.\ Remark \ref{rem: proLie})\\
 graded and dim $\Hopf_0 < \infty$ & continuous inverse algebra & $\infty$-dim. Lie group (Section \ref{sect: noncon})\\
 graded and connected & locally convex algebra & $\infty$-dim. Lie group (Section \ref{sect: conn})\\
 \hline
 \end{tabular}
 \caption{Overview of topological and Lie group structures on character groups of Hopf algebras.}
 \end{center}
\end{figure}

In general, the character group need not admit a Lie group structure as was shown in \cite[Example 4.11]{BDS16}. There we exhibited a character group of the group algebra of an abelian group of infinite rank which can not be a Lie group.

\begin{rem}\label{rem: proLie}
If the target algebra $B$ is a weakly complete algebra, e.g.\ a finite dimensional algebra, the character group $\chG{\Hopf}{B}$ is always a projective limit of finite-dimensional Lie groups.
In \cite[Theorem 5.6]{BDS16} we have proved that for an arbitrary Hopf algebra $\Hopf$ and $B$ a weakly complete algebra, $\chG{\Hopf}{B}$ is a special kind of topological group, a so called pro-Lie group (see the survey \cite{HMax}).
A pro-Lie group is closely connected to its pro-Lie algebra which turns out to be isomorphic to $\chA{\Hopf}{B}$ for the pro-Lie group $\chG{\Hopf}{B}$.
Although pro-Lie groups do not in general admit a differentiable structure, a surprising amount of Lie theoretic properties carries over to pro-Lie groups (we refer to the monograph \cite{MR2337107} for a detailed account).
 \end{rem}

 Often the character group of a Hopf algebra will have more structure than a topological group.
 As we will see in the next example character groups often carry Lie group structures.

\begin{ex}
 Let $G$ be a compact Lie group. Then we consider the set $\mathcal{R} (G)$ of
\emph{representative functions}, i.e.\ continuous functions $f \colon G
\rightarrow \RR$ such that the set of right translates $R_x f \colon G
\rightarrow \RR, \ R_x f(y) = f(yx)$ generates a finite-dimensional subspace of
$C(G,\RR)$, cf.\ \cite[Chapter 3]{MR3114697} or \cite[Section 3]{MR2290769} for more information and
examples.

 Using the group structure of $G$ and the algebra structure induced by
$C(G,\RR)$ (pointwise multiplication), $\mathcal{R} (G)$ becomes a Hopf algebra
(see \cite[pp. 42-43]{MR1789831}).
 Following Remark \ref{rem: proLie}, we know that $\chG{\mathcal{R} (G), \RR}$ becomes a topological group.

 It follows from Tannaka-Kre\u{i}n duality that as compact groups
$\chG{\mathcal{R} (G)}{\RR}\cong G$, whence $\chG{\mathcal{R} (G), \RR}$
inherits a posteriori a Lie group structure  \cite[Theorem 1.30 and
1.31]{MR1789831}.\footnote{This is only a glimpse at Tannaka-Kre\u{i}n duality,
which admits a generalisation to compact topological groups (using complex
representative functions, see \cite[Chapter 7, \S 30]{MR0262773} and cf.\
\cite[p. 46]{MR1789831} for additional information in the Lie group case). Also
we recommend \cite[Chapter 6]{MR3114697} for more information on compact Lie
groups.} Observe that the Lie group structure induced on $\chG{\mathcal{R} (G), \RR}$ via Tannaka-Kre\u{i}n duality coincides with the ones discussed in Sections \ref{sect: noncon} and \ref{sect: conn} (when these results are applicable to the Hopf algebra of representative functions).
\end{ex}

\begin{ex}[The Butcher group]
Let $\trees$ denote the set of rooted trees, and $\Hopf_{\mathrm{CK}}=\langle\! \langle \trees \rangle\!\rangle$ the free commutative algebra generated by $\trees$.
The Grossman--Larson coproduct is defined on trees as
\[
\Delta(\tau) = \tau \tensor \one + \sum_{\sigma} (\tau \setminus \sigma) \tensor \sigma
\]
where the sum goes over all connected subsets $\sigma$ of $\tau$ containing the root.
Together with the algebra structure, The Grossman--Larson coproduct defines a graded, connected bialgebra structure on $\Hopf_{\mathrm{CK}}$, and therefore also a Hopf algebra structure.

The characters $\chG{\Hopf_{\mathrm{CK}}}{\RR}$ are the algebra morphisms $\Hom_{\Alg}(\Hopf_{\mathrm{CK}}, \RR)$.
Clearly, we can identify
\[ \chG{\Hopf_{\mathrm{CK}}}{\RR} \simeq\RR^{\trees}
\]

In numerical analysis, the character group $\chG{\Hopf_{\mathrm{CK}}}{\RR}$ is known as the Butcher group \cite{Butcher72, HLW2006}.
This group is closely connected to a class of numerical integrators for ordinary differential equations.
Namely, we let $\dot{y} = f(y)$ be an autonomous ordinary differential equation on an affine space $E$.
Many numerical integrators \footnote{To be exact: The class of integrators depending only on the affine structure (cf.\ \cite{MR3510021}).}  can be expanded in terms of the \emph{elementary differentials} of the vector field $f$, i.e. as a series
\begin{equation}
 y_{n+1} = y_n + a(\onenode) hf(y_n) + a(\twonode) h^2 f^{'}f(y_n) +\dotsb
 \label{eq: bseries}
 \end{equation}
The elementary differentials are in a natural one-to-one correlation with $\trees$, and the series \eqref{eq: bseries} thus defines an element in $\chG{\Hopf_{\mathrm{CK}}}{\RR}$.
The crucial observation is that, (after a suitable scaling,) the convolution product in $\chG{\Hopf_{\mathrm{CK}}}{\RR}$ corresponds to the composition of numerical integrators.

In the following, it will be established that $\chG{\Hopf_{\mathrm{CK}}}{\RR}$ is a $\RR$-analytic, $C^0$-regular Fr\'{e}chet Lie group as well as a pro-Lie group. See \cite{BS15, BDS16} for further details on the Lie group structure of $\chG{\Hopf_{\mathrm{CK}}}{\RR}$.

However, in some sense, the Butcher group $\chG{\Hopf_{\mathrm{CK}}}{\RR}$ is too big to properly analyze numerical integrators.
For every numerical integrator, the coefficients $a\from \trees \to \RR$ satisfy a growth bound $|a(\tau)|\le CK^{|\tau|}$.
Elements of  $\chG{\Hopf_{\mathrm{CK}}}{\RR}$ satisfying such growth bounds form a subgroup, and even a Lie subgroup.
However, the modelling space now becomes a Silva space\footnote{Silva spaces arise as special inductive limits of Banach spaces, see \cite{MR0287271} for more information. They are also often called (DFS)-space in the literature, as they can be characterised as the duals of Fr\'{e}chet-Schwartz spaces.}.
This group is studied in the article \cite{BS16tame}.
\end{ex}

 In the next section we begin with establishing general results on the infinite-dimensional Lie group structure of Hopf algebra character groups.
 These manifolds will in general be modelled on spaces which are more general then Banach spaces.
 Thus the usual differential calculus has to be replaced by the so called Bastiani calculus (see \cite{MR0177277}, i.e.\ differentiability means existence and continuity of directional derivatives).
 For the readers convenience, Appendix \ref{app: calculus} contains a brief recollection of this calculus.
% The key ingredient in the proof is the fundamental theorem of coalgebras \cite[2.2.1]{Swe69}
% \begin{thm}[Fundamental theorem of coalgebras]
% Let $C$ be a coalgebra and $c\in C$ an element. Then the subcoalgebra generated by $c$ is finite dimensional.
% Equivalently, $C$ is equal to the directed union of its finite-dimensional subcoalgebras $\{C_i\}_{i\in I}$.
% \end{thm}

% \begin{proof}[Sketch of proof of \ref{thm: proLie}]
% Using the fundamental theorem of coalgebras, we have $\Hopf = \Cup_{i\in I} C_i$, where $\{C_i\}_{i\in I}$ are the finite-dimensional subcoalgebras of $\Hopf$. We can write $\Hom_{\KK}(\Hopf, B)$ as the projective limit of $\Hom_{\KK}(C_i, B)$.
% The groups of units $\Hom_{\KK}(C_i, B)^\times$ are then finite-dimensional Lie groups, and
% $\Hom_{\KK}(H, B)^\times$ is a pro-Lie group by the second part of Definition \ref{def: proLie}.
% By the first part of Definition \ref{def: proLie}, a closed subgroup of a pro-Lie group is also a pro-Lie group, therefore $\chG{\Hopf}{B}$ is also a pro-Lie group.
%
% The statements about Lie algebras follows from direct computations involving the exponential.
% \end{proof}

\subsection{Character groups for $\Hopf$ graded with finite dimensional $\Hopf_0$ and $B$ a continuous inverse algebra}\label{sect: noncon}

In this section we consider graded but not necessarily connected Hopf algebras.
In general, character groups of non-connected Hopf algebras do not admit a Lie group structure.
Recall from example from \cite[Example 4.11 (b)]{BDS16} that the character group of the group algebra of an infinite-group does in general not admit a Lie group structure.
However, if the Hopf algebra is not too far removed from being connected (i.e.\ the $0$-degree subspace $\Hopf_0$ is finite-dimensional) and the target algebra is at least a continuous inverse algebra, we can prove that the character group $\chG{\Hopf}{B}$ is an infinite-dimensional Lie group.
This result is new and generalises \cite{BDS16} where only character groups of graded and connected Hopf algebras were treated (albeit the target algebra in the graded connected case may be a locally convex algebra).

\begin{setup}
 Let $(A,\cdot)$ be a (real or complex) locally convex algebra (i.e.\ the locally convex topology of $A$ makes the algebra product jointly continuous). We call $(A,\cdot)$ \emph{continuous inverse algebra} (or \emph{CIA} for short) if its unit group $A^\times$ is open and inversion $A^\times \rightarrow A, a \mapsto a^{-1}$ is continuous.
\end{setup}
The class of locally convex algebras which are CIAs are of particular interest to infinite-dimensional Lie theory as their unit groups are in a natural way (analytic) Lie groups (see \cite{MR1948922,MR2997582}).

Before we establish the structural result, we have to construct an auxiliary Lie group structure in which we can realise the character group as subgroup.
To construct the auxiliary Lie group, we can dispense with the Hopf algebra structure and consider only (graded) coalgebras at the moment.
The authors are indebted to K.--H.\ Neeb for providing a key argument in the proof of the following Lemma.

\begin{lem}\label{lem: cia0}
 Let $(C, \Delta)$ be a finite-dimensional coalgebra and $B$ be a CIA. Then $(\Hom_\KK (C,B), \star)$ is a CIA.
\end{lem}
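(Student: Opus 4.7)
The strategy is to realize $R := \Hom_\KK(C,B)$ via left multiplication as a spectrally invariant subalgebra of a matrix algebra $M_n(B)$, and then invoke the fact that matrix algebras over CIAs remain CIAs.

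First, since $C$ is finite-dimensional, the canonical map
\[
\Psi \colon C^* \otimes_\KK B \longrightarrow \Hom_\KK(C,B), \qquad \xi \otimes b \mapsto \bigl( c \mapsto \xi(c)\,b \bigr),
\]
is a homeomorphism; a short Sweedler-notation computation shows that it is an isomorphism of algebras when $C^*$ carries the finite-dimensional dual algebra structure induced by $\Delta$ and the source is given the tensor product algebra structure. Thus the statement reduces to the following reformulation: for any finite-dimensional unital associative $\KK$-algebra $A$ and any CIA $B$, the algebra $R = A \otimes_\KK B$ is a CIA.

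Next, fix a $\KK$-basis of $A$. This exhibits $R$ as a free right $B$-module of rank $n = \dim A$, so left multiplication defines a continuous, injective, unital algebra homomorphism
\[
\Phi \colon R \longrightarrow \mathrm{End}_{B}(R) \cong M_n(B), \qquad r \mapsto L_r.
\]
The central claim is that $r \in R$ is invertible in $R$ if and only if $\Phi(r)$ is invertible in $M_n(B)$. One direction is immediate from $\Phi$ being a unital algebra homomorphism. For the other, if $L_r$ is bijective then there exists $s \in R$ with $L_r(s) = 1$, i.e.\ $rs = 1$; the identity $r(sr - 1) = 0$ combined with injectivity of $L_r$ then forces $sr = 1$, so $r$ is two-sided invertible in $R$.

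Finally, $M_n(B)$ is itself a CIA whenever $B$ is (a standard fact, see e.g.\ \cite{MR1948922}), so its unit group is open with continuous inversion. Pulling back along the continuous map $\Phi$ yields $R^\times = \Phi^{-1}(M_n(B)^\times)$ open in $R$, and inversion in $R$ can be written as the composition $r \mapsto \Phi(r) \mapsto \Phi(r)^{-1} \mapsto \Phi(r)^{-1}(1_R) = r^{-1}$, which is a chain of continuous maps. The main conceptual step is the spectral-invariance claim identifying the units of $R$ with those of $M_n(B)$ via the left regular representation; once that is in place, the rest of the proof is formal bookkeeping.
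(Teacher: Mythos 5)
Your proof is correct. It shares the paper's overall architecture: both identify $\Hom_\KK(C,B)$ with the tensor product of $B$ and the finite-dimensional dual algebra $C^*$ (your $\Psi$ is the inverse of the map $\kappa$ used in the paper), both then pass to $M_n(B)$ via the regular representation of the finite-dimensional factor, and both invoke the standard fact that matrix algebras over a CIA are again CIAs. Where you genuinely diverge is in how the crucial step --- that $C^*\otimes_\KK B$ inherits the CIA property from $M_n(B)$ --- is justified. The paper outsources this to a footnote sketching an argument of Gl\"{o}ckner--Neeb: the image of $B\otimes C^*$ in $M_n(B)$ is a commutant (of the right regular representation), and the commutant of a set in a CIA is again a CIA. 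You instead prove spectral invariance of the embedding directly: if $L_r$ is invertible in $\mathrm{End}_B(R)\cong M_n(B)$, surjectivity gives $s$ with $rs=1$, and then $L_r(sr-1)=0$ together with injectivity of $L_r$ forces $sr=1$, so $r\in R^\times$; openness of $R^\times=\Phi^{-1}\bigl(M_n(B)^\times\bigr)$ and continuity of $r\mapsto \Phi(r)^{-1}(1_R)$ then follow formally. This is more elementary and self-contained --- it avoids both the bicommutant property of the regular representation and the commutant-of-a-CIA lemma, and it does not lean on an unpublished reference --- at the price of being tailored to the regular representation rather than yielding the more general commutant statement. Both routes are valid.
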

\begin{proof}
 Consider the algebra $(A := \Hom_{\KK} (C,\KK), \star_A)$, where $\star_A$ is the convolution product.
 Then the algebraic tensor product $T := B \otimes_\KK A$ with the product $(b\otimes \varphi) \cdot (c\otimes \psi) := bc \otimes \varphi \star_A \psi$ is a locally convex algebra.
 Since $C$ is a finite-dimensional coalgebra, $A$ is a finite-dimensional algebra. Due to an argument which was communicated to the authors by K.--H.\ Neeb, the tensor product of a finite-dimensional algebra and a CIA is again a CIA.\footnote{We are not aware of a literature reference of this fact apart from the forthcoming book by Gl\"{o}ckner and Neeb \cite{GNforth}. To roughly sketch the argument from \cite{GNforth}: Using the regular representation of $A$ one embeds $B \otimes_\KK A$ in the matrix algebra $M_n (B)$ (where $n = \text{dim } A$). Now as $A$ is a commutant in $\text{End}_\KK (A)$, the same holds for $B \otimes A$ in $M_n (B)$. The commutant of a set in a CIA is again a CIA, whence the assertion follows as matrix algebras over a CIA are again CIAs (cf.\ \cite[Corollary 1.2]{MR0448350}).}
 Thus it suffices to prove that the linear mapping defined on the elementary tensors via
 $$\kappa \colon T \rightarrow \Hom_{\KK} (C,B) , \ b \otimes \varphi \mapsto (x \mapsto \varphi(x)b)$$
 is an isomorphism of unital algebras.
 Since $A$ is finite-dimensional, it is easy to see that $\kappa$ is an isomorphism of locally convex spaces.
 Thus it suffices to prove that $\kappa$ is a algebra morphism.
 To this end let $\varepsilon$ be the counit of $C$. We recall that $\one_A = \varepsilon$, $\one_{\Hom_\KK (C,B)} = (x \mapsto \varepsilon(x) \cdot \one_B)$ and $\one_T = \one_B \otimes \one_1 = \one_B \otimes \varepsilon$ are the units in $A$, $\Hom_\KK (C,B)$ and $T$, respectively.
 Now $\kappa (\one_T ) = (x \mapsto \varepsilon (x) \one_B) = \one_{\Hom_\KK (C,B)}$, whence $\kappa$ preserves the unit.

 As the elementary tensors span $T$, it suffices to establish multiplicativity of $\kappa$ on elementary tensors $b_1 \otimes \varphi,  b_2\otimes \psi \in T$.
 For $c \in C$ we use Sweedler notation to write $\Delta (c) = \sum_{(c)} c_1 \otimes c_2$. Then
 \begin{align*}
  \kappa ((b_1\otimes \varphi) \cdot (b_2 \otimes \psi))(c) &= \kappa (b_1b_2 \otimes \varphi \star_A \psi)(c) =  \varphi \star_A \psi (x) b_1 b_2 \\ &=  \sum_{(c)} \varphi(c_1) \psi (c_2) b_1 b_2 = \sum_{(c)} (\varphi (c_1) b_1) (\psi(c_2)b_2) \\
  &= \sum_{(c)} \kappa (b_1 \otimes \varphi) (c_1) \kappa (b_2 \otimes \psi)(c_2)\\ &= \kappa (b_1 \otimes \varphi) \star \kappa (b_2 \otimes \psi)(c)
 \end{align*}
 shows that the mappings agree on each $c \in C$, whence $\kappa$ is multiplicative. Summing up, $\Hom_\KK (C,B)$ is a CIA as it is isomorphic to the CIA $B \otimes A$.
 \end{proof}

 \begin{prop}[$A^\times$ is a regular Lie group]				\label{prop: unit group Lie group}
  Let $C = \bigoplus_{n \in \NN_0} C_n$ be a graded coalgebra with $\mathrm{dim }\  C_0 < \infty$ and $B$ be a CIA.
  Then $A = (\Hom_{\KK}(\Hopf,B),\star)$ is a CIA whose unit group $A^\times$ is Baker--Campbell--Hausdorff--Lie group (BCH--Lie group)\footnote{BCH-Lie groups derive their name from the fact that there is a neighborhood in their Lie algebra on which the Baker--Campbell--Hausdorff series converges and yields an analytic map. See Definition \ref{defn: BCH}.} with Lie algebra $(A,\LB )$, where $\LB $ denotes the commutator bracket with respect to $\star$.

  If in addition $B$ is Mackey complete, then $A$ is Mackey complete and the Lie group $A^\times$ is $C^1$-regular. If $B$ is even sequentially complete, so is $A$ and the Lie group $A^\times$ is $C^0$-regular.
  In both cases the associated evolution map is even $\KK$-analytic and the Lie group exponential map is given by the exponential series.
 \end{prop}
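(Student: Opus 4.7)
The plan is to reduce the proposition to the general theory of unit groups of continuous inverse algebras. First I would exploit the densely graded decomposition $A = \Hom_\KK(C,B) \cong \prod_{n\in\NN_0} A_n$ with $A_n := \Hom_\KK(C_n,B)$. Because $C$ is a graded coalgebra, $\Delta(C_0)\subseteq C_0\otimes C_0$, so $A_0$ is a closed locally convex subalgebra of $A$ and the restriction $\pi_0\colon A \to A_0$ is a continuous algebra morphism. Since $\dim C_0 < \infty$, Lemma~\ref{lem: cia0} applies to $(C_0,\Delta|_{C_0})$ and exhibits $A_0$ as a CIA.

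Next I would establish that $A$ itself is a CIA. The grading forces $A_+ := \prod_{n\ge 1} A_n = \ker\pi_0$ to be a closed two-sided ideal which is topologically nilpotent in the following strong sense: for $\varphi \in A_+$ and $c \in C_n$, every summand of $\Delta^{(k-1)}(c)\in \bigoplus_{i_1+\cdots+i_k=n} C_{i_1}\otimes\cdots\otimes C_{i_k}$ with $k>n$ must contain a tensor factor in $C_0$, and $\varphi$ vanishes there, so $\varphi^{\star k}(c)=0$. Consequently the Neumann series $\sum_{k\ge 0}(-\varphi)^{\star k}$ converges in the product topology and provides a continuous inverse of $\one_A + \varphi$. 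Decomposing $a = a_0 + a_+$ with $a_0\in A_0$ and $a_+\in A_+$, the identity $a = a_0 \star (\one_A + a_0^{-1}\star a_+)$ together with multiplicativity of $\pi_0$ shows that $a$ is a unit of $A$ if and only if $a_0$ is a unit of $A_0$, in which case $a^{-1} = (\one_A + a_0^{-1}\star a_+)^{-1}\star a_0^{-1}$. Thus $A^\times = \pi_0^{-1}(A_0^\times)$ is open and inversion is continuous, so $A$ is a CIA.

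Once $A$ is a CIA, the Lie-theoretic conclusions follow from the standard theory of unit groups of continuous inverse algebras. Glöckner's results \cite{MR1948922} identify $A^\times$ as a $\KK$-analytic BCH--Lie group modelled on $A$, with Lie bracket the commutator $\LB$ with respect to $\star$ and Lie group exponential given by the exponential series $\exp^{\star}$. For the completeness statements, $A_n$ is a closed subspace of $B^{C_n}$, so Mackey (respectively sequential) completeness transfers from $B$ to each $A_n$ and hence to the product $A$. Under these hypotheses the regularity theorems for unit groups of sufficiently complete CIAs \cite{MR1948922,MR2997582} then furnish $C^1$-regularity in the Mackey complete case and $C^0$-regularity in the sequentially complete case, together with $\KK$-analyticity of $\mathrm{evol}$. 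In my view the main obstacle is the second paragraph: it is the translation of the coalgebra grading into topological nilpotency of $A_+$, and from there into openness of $A^\times$ and continuity of inversion on the whole densely graded algebra, that does the real work; the remaining assertions are then a clean application of established Lie-theoretic machinery.
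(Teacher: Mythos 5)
Your first two paragraphs are correct and in fact more self-contained than the paper's treatment: the paper simply cites \cite[Lemma 1.6\,(c) and Lemma B.7]{BDS16} for the fact that a densely graded algebra with $A_0$ a CIA is again a CIA, whereas you reconstruct the underlying argument (the ideal $A_+=\ker\pi_0$ is ``locally nilpotent'' because every length-$k$ tensor in $\Delta^{(k-1)}(c)$ with $k>n$ has a factor in $C_0$, so the Neumann series terminates pointwise, and $A^\times=\pi_0^{-1}(A_0^\times)$). That is precisely the idea behind the cited lemma, and your reduction of $A_0$ to Lemma~\ref{lem: cia0} matches the paper. The identification of $A^\times$ as a $\KK$-analytic BCH--Lie group via Gl\"ockner's theorem is also the route taken in the paper.

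The gap is in your final paragraph. Regularity of $A^\times$ does \emph{not} follow from (Mackey or sequential) completeness of the CIA $A$ alone; neither \cite{MR1948922} nor \cite{MR2997582} contains such a theorem. The result actually invoked, \cite[Proposition 4.4]{MR2997582}, requires in addition that $A$ satisfy the (GN)-property (Definition~\ref{defn: GN_property}), i.e.\ uniform exponential bounds $\norm{\mu_A^{(n)}}_{p,q}\le M^n$ on the iterated products. Verifying this is the substantive part of the paper's proof and you skip it entirely: one shows that $B$ has the (GN)-property because it is a \emph{commutative} Mackey complete CIA (\cite[Remark 1.2 and Corollary 1.3]{MR2997582}), then transfers it to $A_0$ by the explicit estimate on the structure constants $\nu_i^{jk}$ of the coproduct on $C_0$ (Lemma~\ref{lem: GN-Banach0}), and finally lifts it from $A_0$ to the densely graded algebra $A$ via \cite[Lemma 1.10]{BDS16}. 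Note that you cannot shortcut this by appealing to the commutative case directly, since $A=(\Hom_\KK(C,B),\star)$ is in general non-commutative. Without the (GN)-step your argument establishes the Lie group structure but not the $C^1$- resp.\ $C^0$-regularity or the analyticity of the evolution map.
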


 \begin{proof}
  Recall from \cite[Lemma 1.6 (c) and Lemma B.7]{BDS16} that the locally convex algebra $A$ is a Mackey complete CIA since $A_0$ is such a CIA by Lemma \ref{lem: cia0} (both CIAs are even sequentially complete if $B$ is so).
  Now the assertions concerning the Lie group structure of the unit group are due to Gl\"{o}ckner (see \cite{MR1948922}).

  To see that the Lie group $A^\times$ is $C^k$-regular ($k=1$ for Mackey complete and $k=0$ of sequentially complete CIA $B$), we note that the regularity of the unit group follows from the so called called (GN)-property (cf.\ \cite{BDS16} and see Definition \ref{defn: GN_property} below) and (Mackey) completeness of $A$.
  Having already established completeness, we recall from \cite[Lemma 1.10]{BDS16} that $A$ has the (GN)-property if $A_0$ has the (GN)-property.
  Below in Lemma \ref{lem: GN-Banach0} we establish that $A_0$ has the (GN)-property if $B$ has the (GN)-property.
  Now $B$ is a commutative Mackey complete CIA, whence $B$ has the (GN)-property by \cite[Remark 1.2 and the proof of Corollary 1.3]{MR2997582}.
  Summing up, $A$ has the (GN)-property and thus \cite[Proposition 4.4]{MR2997582} asserts that $A^\times$ is $C^k$-regular with analytic evolution map.
 \end{proof}

 Before we can establish the (GN)-property for $A_0$ as in the proof of Proposition \ref{prop: unit group Lie group} we need to briefly recall this condition.
  \begin{defn}[(GN)-property]								\label{defn: GN_property}
  A locally convex algebra $A$ is said to satisfy the \emph{(GN)-property}, if for every continuous seminorm $p$ on $A$, there exists a continuous seminorm $q$ and a number $M\geq0$ such that for all $n\in\NN$, we have the estimate:
  \begin{equation}\label{eq: GN}
   \norm{\mu_A^{(n)}}_{p,q} := \sup \{p(\mu_A^{(n)} (x_1, \ldots ,x_n)) \mid q(x_i) \leq 1,\ 1 \leq i \leq n\} \leq M^n.
  \end{equation}
  Here, $\mu_A^{(n)}$ is the $n$-linear map
  $\mu_A^{(n)} \colon \underbrace{A\times\cdots\times A}_{n \text{ times}} \rightarrow A, (a_1,\ldots,a_n) \mapsto a_1 \cdots a_n$.
  \end{defn}

\begin{lem}\label{lem: GN-Banach0}
 Let $C$ be a finite-dimensional coalgebra and $B$ be a CIA with the (GN)-property. Then the following holds:
 \begin{enumerate}
  \item $(A := \Hom_\KK (C,B),\star)$ has the (GN)-property.
  \item If $(B, \norm{\cdot})$ is even a Banach algebra, then $A$ is a Banach algebra.
 \end{enumerate}
\end{lem}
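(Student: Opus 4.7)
The plan is to exploit the algebra isomorphism $\kappa \colon B \otimes_\KK A' \xrightarrow{\sim} A$ constructed in the proof of Lemma \ref{lem: cia0}, where $A' := \Hom_\KK(C,\KK)$ is the finite-dimensional convolution dual of $C$. Since $A'$ is finite-dimensional, fix any norm $\|\cdot\|_{A'}$ on $A'$ together with a constant $K \geq 1$ satisfying $\|aa'\|_{A'} \leq K\|a\|_{A'}\|a'\|_{A'}$; iterating yields $\|a_1\cdots a_n\|_{A'} \leq K^{n-1}\prod_\ell \|a_\ell\|_{A'}$. The same finite-dimensionality guarantees that every continuous seminorm on $A \cong B \otimes_\KK A'$ is dominated by a projective tensor seminorm of the form $p \otimes \|\cdot\|_{A'}$ with $p$ a continuous seminorm on $B$---equivalently, after choosing a basis of $A'$ to identify $A$ topologically with $B^{\dim A'}$, by a sum of seminorms on $B$ applied coordinate-wise.

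For part~(1), I apply the (GN)-property of $B$ to $p$ to obtain a continuous seminorm $q_B$ on $B$ and a constant $M_B \geq 1$ with $p(b_1\cdots b_n) \leq M_B^n q_B(b_1)\cdots q_B(b_n)$ for all $n$ and all $b_i \in B$. Writing $t_\ell = \sum_i b_i^{(\ell)} \otimes a_i^{(\ell)}$, the $n$-fold convolution expands as
\[
t_1 \star \cdots \star t_n = \sum_{i_1,\ldots,i_n} b_{i_1}^{(1)}\cdots b_{i_n}^{(n)} \otimes a_{i_1}^{(1)}\cdots a_{i_n}^{(n)}.
\]
Applying the (GN)-bound to each $B$-product and the iterated submultiplicativity to each $A'$-product, then taking the infimum over representations of the $t_\ell$, produces
\[
(p\otimes\|\cdot\|_{A'})(t_1\star\cdots\star t_n) \leq M_B^n K^{n-1} \prod_{\ell=1}^n (q_B\otimes\|\cdot\|_{A'})(t_\ell) \leq (M_B K)^n \prod_\ell (q_B\otimes\|\cdot\|_{A'})(t_\ell),
\]
which transported back through $\kappa$ gives the required (GN)-estimate on $A$ with constant $M := M_B K$.

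For part~(2), if $B$ is a Banach algebra then the canonical isomorphism $A \cong B^{\dim A'}$ makes $A$ into a Banach space, and the $n=2$ case of the computation above (with $M_B = 1$, which is admissible for a Banach algebra) gives $\|t\star s\| \leq K\|t\|\|s\|$; rescaling to $\|\cdot\|' := K\|\cdot\|$ yields an equivalent submultiplicative norm, so $(A,\|\cdot\|')$ is a Banach algebra. The whole argument is essentially bookkeeping along the tensor decomposition of Lemma \ref{lem: cia0}; the one step genuinely requiring care is the domination of continuous seminorms on $A$ by a single projective tensor seminorm of the form $p \otimes \|\cdot\|_{A'}$, which rests on the finite-dimensionality of $C$ (and hence of $A'$). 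Beyond this I do not foresee any obstacle.
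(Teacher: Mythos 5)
Your proof is correct, and it arrives at the same estimates as the paper's, but organizes the argument differently. The paper works directly on $A \cong B^{d}$ with a basis $e_1,\dots,e_d$ of $C$: it writes the coproduct in structure constants $\Delta(e_i)=\sum_{j,k}\nu_i^{jk}\,e_j\otimes e_k$, sets $K=d^2\max_{i,j,k}\{|\nu_i^{jk}|,1\}$, and bounds $p_\infty(\psi_1\star\cdots\star\psi_n(e_i))$ by expanding the iterated coproduct $\Delta^{n}$ term by term; part~(2) is then the $n=2$ computation with the rescaled norm $K\norm{\cdot}_\infty$. You instead route everything through the isomorphism $\kappa\colon B\otimes_\KK A' \to A$ from Lemma~\ref{lem: cia0} and the projective cross-seminorms $p\otimes\norm{\cdot}_{A'}$. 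Since the structure constants of multiplication in $A'=\Hom_\KK(C,\KK)$ are precisely the $\nu_i^{jk}$, your submultiplicativity constant $K$ plays the same role as the paper's; what your formulation buys is that the associativity bookkeeping for $\Delta^{n}$ is absorbed into iterated submultiplicativity of a single norm on the finite-dimensional algebra $A'$, and part~(2) reduces to the standard fact that the projective tensor product of a Banach algebra with a (rescaled) finite-dimensional normed algebra is again a Banach algebra. The two points your route genuinely needs — that the seminorms $p\otimes\norm{\cdot}_{A'}$ generate the topology of $A\cong B^{d}$ (true because the coordinate functionals on the finite-dimensional $A'$ are $\norm{\cdot}_{A'}$-bounded, so each such seminorm is squeezed between coordinate-wise maxima and sums), and that the (GN)-estimate need only be verified on a cofinal family of seminorms — you correctly identify and justify; the paper uses the same two facts in coordinate form. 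No gap.
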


\begin{proof}
 Choose a basis $e_i , 1 \leq i \leq d$ for $\Hopf_0$. Identifying linear maps with the coefficients on the basis, we obtain $A = \Hom_\KK (C,B) \cong B^d$ (isomorphism of topological vector spaces).
 For every continuous seminorm $p$ on $B$, we obtain a corresponding seminorm $p_\infty \colon A \rightarrow \RR, \phi \mapsto \max_{1\leq i \leq d} p (\phi (e_i))$ and these seminorms form a generating set of seminorms for the locally convex algebra $A$.
 Let us now write the coproduct of the basis elements as
 $$\Delta (e_i) = \sum_{j,k} \nu_i^{jk} e_j \otimes e_k, \quad 1\leq i\leq d \quad \text{for } \nu_i^{jk} \in \KK.$$
 To establish the properties of $A$ we need an estimate of the structural constants, whence we a constant $K := d^2 \max_{i,j,k} \{|\nu_{i}^{jk}| ,1\}$
 \begin{enumerate}
 \item It suffices to establish the (GN)-property for a set of seminorms generating the topology of $A$. Hence it suffices to consider seminorms $q_\infty$ induced by a continuous seminorm $q$ on $B$.
 Since $B$ has the (GN)-property there are a continuous seminorm $p$ on $B$ and a constant $M \geq 0$ which satisfy \eqref{eq: GN} with respect to the chosen $q$. We will now construct a constant such that \eqref{eq: GN} holds for the seminorms $q_\infty$ and $p_\infty$ taking the r\^{o}le of $q$ and $p$.

 Observe that $q_\infty (\psi) \leq 1$ implies that $q(\psi(e_i)) \leq 1$ for each $1\leq i \leq d$.
 Thus by choice of the constants a trivial computation shows that the constant $KM$ satisfies \eqref{eq: GN} for $q_\infty, p_\infty$ and $n=1$.
 We will show that this constant satisfies the inequality also for all $n>1$ and begin for the readers convenience with the case $n=2$.
  Thus for $\psi_1,\psi_2 \in A$ with $q_\infty (\psi_l) \leq 1, l=1,2$ and $1\leq k \leq d$ we have
 \begin{align*}
  p (\psi_1 \star \psi_2 (e_i)) &= p \left(\sum_{j,k} \nu^{jk}_i \psi_1 (e_j) \psi_2 (e_k) \right) \leq \sum_{j,k} |\nu^{jk}_i| \underbrace{p(\psi_1 (e_i)\psi_2(e_j))}_{\leq M^2}\\
   &\leq \underbrace{K}_{\geq 1}M^2\leq (KM)^2
 \end{align*}
 As the estimate neither depends on $i$ nor on the choice of $\psi_1, \psi_2$ (we only used $q_\infty (\psi_l) \leq 1$), $KM$ satisfies $\norm{\mu_A^{(2)}}_{p_\infty,q_\infty} \leq (KM)^2$ .
 Now for general $n \geq 2$ we choose $\psi_l \in A$ with $q_\infty (\psi_l) \leq 1$ and $1\leq l \leq n$.
 As convolution is associative, $\psi_1 \star \cdots \star \psi_n$ is obtained from applying $\psi_1 \otimes \cdots \otimes \psi_n$ to the iterated coproduct $\Delta^n := \id_C \otimes \Delta^{n-1}, \Delta^1 := \Delta$ and subjecting the result to the $n$-fold multiplication map $B\otimes B \otimes \cdots \otimes B \rightarrow B$ of the algebra $B$.
 Hence one obtains the formula
 \begin{align*}
  &p(\psi_1 \star \psi_2 \star \cdots \star \psi_n (e_i)) \\
  \leq& \underbrace{\sum_{j_1,k_1} \sum_{j_2, k_2} \cdots \sum_{j_{n-1},k_{n-1}}}_{\# \text{of terms} = d^2 \cdot d^2 \cdots d^2 = (d^{2})^{n-1}} \underbrace{|\nu_{j_1 ,k_1}^{i}| |\nu_{j_2,k_2}^{k_1}| \cdots |\nu_{j_{n-1},k_{n-2}}^{k_{n-2}}|}_{\leq (\max_{i,j,k} \{|\nu_{i}^{jk}| ,1\})^{n-1} } p\left(\psi_{1} (e_{k_1})\prod_{2\leq r\leq n} \psi_{r} (e_{k_{r}})\right)    \\
  \leq& K^{n-1} M^n \leq (KM)^n.
 \end{align*}
 Again the estimate does neither depend on $e_i$ nor on the choice of $\psi_1, \ldots, \psi_n$, whence we see that one can take $KM$ in general as a constant which satisfies \eqref{eq: GN} for $q_\infty$ and $p_\infty$.
 We conclude that $A$ has the (GN)-property if $B$ has the (GN)-property.
 \item Let now $(B,\norm{\cdot})$ be a Banach algebra, then $A \cong B^d$ is a Banach space with the norm $\norm{\phi}_{\infty} := \max_{1 \leq i \leq d} \norm{\phi(e_i)}$.
 To prove that $A$ admits a submultiplicative norm, define the norm $\norm{\alpha}_K := K \norm{\alpha}_\infty$ (for $K$ the constant chosen above).
 By construction $\norm{\cdot}_K$ is equivalent to $\norm{\cdot}_\infty$ and we will prove that $\norm{\cdot}_K$ is submultiplicative.
 Consider $\alpha , \beta \in A$ and compute the norm of $\alpha \star \beta$ on a basis element
 \begin{align*}
  \norm{\alpha \star \beta (e_i)} &=\norm{m_B\circ (\alpha \otimes \beta)\circ \Delta (e_i)} \leq \sum_{j,k} |\nu_i^{jk}| \norm{\alpha(e_j)\beta(e_k)} \\
				  &\leq  \sum_{j,k} |\nu_i^{jk}| \norm{\alpha(e_j)}\norm{\beta(e_k)} \leq K \norm{\alpha}_\infty \norm{\beta}_\infty.
 \end{align*}
 In passing from the first to the second row we have used that the norm on $B$ is submultiplicative.
 Summing up, we obtain $\norm{\alpha \star \beta}_K \leq \norm{\alpha}_K \norm{\beta}_K$ whence $A$ is a Banach algebra.\qedhere
 \end{enumerate}
\end{proof}

In case the Hopf algeba $\Hopf$ is only of countable dimension, e.g.\ a Hopf algebra of finite type, and $B$ is a Banach algebra, the unit group $A^\times$ satisfies an even stronger regularity condition.

 \begin{lem}\label{lem: L1regular}
  Let $C = \bigoplus_{n \in \NN_0}$ be a graded coalgebra with $\mathrm{dim }\  C_0 < \infty$ and $B$ be a Banach algebra. Assume in addition that $C$ is of countable dimension, then the Lie group $A^\times$ from Proposition \ref{prop: unit group Lie group} is $L^1$-regular.
 \end{lem}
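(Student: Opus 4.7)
The idea is to realise $A^\times$ as a strict projective limit of Banach--Lie groups, and then transfer $L^1$-regularity from each Banach factor to the limit.

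\emph{Step 1: Projective limit of Banach algebras.} Using the fundamental theorem of coalgebras (every element lies in a finite-dimensional subcoalgebra) together with the countable-dimension hypothesis, express $C$ as a countable increasing union $C = \bigcup_{k \in \NN} C^{(k)}$ of finite-dimensional subcoalgebras. Then $A^{(k)} := \Hom_\KK(C^{(k)}, B)$ is a Banach algebra by Lemma \ref{lem: GN-Banach0}(2), and the inclusions $C^{(k)} \hookrightarrow C^{(k+1)} \hookrightarrow C$ induce continuous surjective unital algebra morphisms $\rho_{k\ell} \colon A^{(\ell)} \to A^{(k)}$ and $\rho_k \colon A \to A^{(k)}$. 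Since the topology on $A$ is pointwise convergence and every $c \in C$ belongs to some $C^{(k)}$, the $\rho_k$ realise $A$ as the locally convex projective limit $\varprojlim_k A^{(k)}$. Compatibility of inverses $\rho_{k\ell}(a^{-1}) = \rho_{k\ell}(a)^{-1}$ then gives $A^\times = \varprojlim_k (A^{(k)})^\times$ at the level of groups.

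\emph{Step 2: Assembling Evol from the Banach factors.} Each unit group $(A^{(k)})^\times$ is a Banach--Lie group and is $L^1$-regular by the classical Carath\'eodory theory for ODEs in Banach spaces (cf.\ \cite{1601.02568v1}). For $\gamma \in L^1([0,1], A)$ define $\gamma^{(k)} := \rho_k \circ \gamma \in L^1([0,1], A^{(k)})$; this preserves Bochner integrability because $\rho_k$ is continuous linear. Solve the initial value problem $(\eta^{(k)})' = \eta^{(k)} \star \gamma^{(k)}$ with $\eta^{(k)}(0) = \one$ in $(A^{(k)})^\times$. Uniqueness of solutions forces $\rho_{k\ell} \circ \eta^{(\ell)} = \eta^{(k)}$ for $k \leq \ell$, so the family $(\eta^{(k)})_k$ assembles into an absolutely continuous curve $\eta \colon [0,1] \to A^\times$ satisfying $\eta' = \eta \star \gamma$ a.e.\ in $A$. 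Smoothness of $\mathrm{evol}_A \colon L^1([0,1], A) \to A^\times$ is verified by composing with each $\rho_k$: the identity $\rho_k \circ \mathrm{evol}_A = \mathrm{evol}_{A^{(k)}} \circ (\rho_k)_*$ expresses it as the composition of a continuous linear map with a smooth map, and since $A$ carries the initial topology from the $\rho_k$, componentwise smoothness upgrades to Bastiani-smoothness into $A$.

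\emph{Main obstacle.} The crux is the passage from the Banach to the projective-limit setting. One must verify that the notion of $L^1$-curve in $A$ from \cite{1601.02568v1} is preserved (and essentially characterised) by the pushforwards $(\rho_k)_*$, that invertibility in $A$ is controlled componentwise so that $A^\times$ is the Lie-theoretic (not merely algebraic) projective limit of the $(A^{(k)})^\times$, and that iterated Bastiani directional derivatives of $\mathrm{evol}_A$ into the Fr\'echet space $A$ are detected by the continuous linear projections $\rho_k$. These are standard features of projective limits of Banach--Lie groups but must be combined carefully within Milnor's regularity framework.
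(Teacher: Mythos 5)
Your Step 1 coincides with the paper's own argument: both invoke the fundamental theorem of coalgebras together with the countable-dimension hypothesis to write $C$ as an increasing union of finite-dimensional subcoalgebras, and both use Lemma \ref{lem: GN-Banach0}(2) to realise $A = \Hom_\KK(C,B)$ as a countable projective limit of Banach algebras, hence a Fr\'echet space. Where you diverge is in how this structure is converted into $L^1$-regularity. The paper extracts from the projective limit only one consequence, namely that $A$ is locally $m$-convex (Banach algebras are locally $m$-convex and this property passes to projective limits), and then cites \cite[Proposition 7.5]{1601.02568v1}, which asserts that the unit group of a Fr\'echet CIA is $L^1$-regular whenever the algebra is locally $m$-convex; all of the measure-theoretic and ODE work is packaged into that citation. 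You instead rebuild the evolution map by hand: solve the Carath\'eodory initial value problem in each Banach--Lie group $(A^{(k)})^\times$, use uniqueness to obtain a compatible family, assemble it into a curve in $A^\times = \lim_{\leftarrow}(A^{(k)})^\times$, and detect smoothness of $\mathrm{evol}$ componentwise. This route is viable -- it is essentially the mechanism behind the cited proposition in this special case -- but the items you list as the ``main obstacle'' are precisely the nontrivial content: that the spaces of $L^1$-curves and of absolutely continuous curves commute with countable projective limits of Banach spaces, and that Bastiani smoothness into such a limit is detected by the bonding maps. These facts are available in \cite{1601.02568v1}, so your argument closes once you invoke them explicitly; the trade-off is that the paper's approach buys brevity by citing the packaged statement, while yours makes the underlying projective-limit mechanism visible at the cost of having to justify each of those transfer principles.
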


 \begin{proof}
  If $C$ is of countable dimension, then $A = \text{Hom}_\KK (C, B)$ is a \Frechet space (as it is isomorphic as a locally convex space to a countable product of Banach spaces).
  Now \cite[Proposition 7.5]{1601.02568v1} asserts that the unit group $A^\times$ of a continuous inverse algebra which is a \Frechet space will be $L^1$-regular if $A$ is locally $m$-convex.
  However, by the fundamental theorem for coalgebras \cite[Theorem 4.12]{MR2035107}, $C = \lim_{\rightarrow} \Sigma_n$ is the direct limit of finite dimensional subcoalgebras. Dualising, Lemma \ref{lem: GN-Banach0} shows that $A = \lim_{\leftarrow} \Hom_{\KK} (\Sigma_n, B)$ is the projective limit of Banach algebras. As Banach algebras are locally $m$-convex and the projective limit of a system of locally $m$-convex algebras is again locally $m$-convex (cf.\ e.g.\ \cite[Chapter III, Lemma 1.1]{MR857807}), the statement of the Lemma follows.
 \end{proof}

  To establish the following theorem we will show now that the Lie group $A^\times$ induces a Lie group structure on the character group of the Hopf algebra.
  Note that the character group is a closed subgroup of $A^\times$, but, contrary to the situation for finite dimensional Lie groups, closed subgroups do not automatically inherit a Lie group structure (see \cite[Remark IV.3.17]{MR2261066} for a counter example).
 \begin{thm}
 \label{thm: Char:Lie}
  Let $\Hopf = \bigoplus_{n\in \NN_0} \Hopf_n$ be a graded Hopf algebra with $\mathrm{dim}\ \Hopf_0 < \infty$.
  Then for any commutative CIA $B$, the group $\chG{\Hopf}{B}$ of $B$-valued characters of $\Hopf$ is a ($\KK-$analytic) Lie group.
  Furthermore, we observe the following.
  \begin{itemize}
    \item[\textup{(i)}] The Lie algebra $\Lf (\chG{\Hopf}{B})$ of $\chG{\Hopf}{B}$ is the Lie algebra $\chA{\Hopf}{B}$ of infinitesimal characters with the commutator bracket $\LB[\phi,\psi] = \phi \star \psi - \psi \star \phi$.
    \item[\textup{(ii)}] $\chG{\Hopf}{B}$ is a BCH--Lie group which is locally exponential, i.e.~the Lie group exponential map $\exp \colon \chA{\Hopf}{B} \rightarrow \chG{\Hopf}{B}, \ x \mapsto \sum_{n=0}^\infty \frac{x^{\star_n}}{n!}$ is a local $\KK$-analytic diffeomorphism.
    \end{itemize}
 \end{thm}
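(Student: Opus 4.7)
The strategy is to realize $\chG{\Hopf}{B}$ as a closed BCH--Lie subgroup of the BCH--Lie group $A^\times$ furnished by Proposition \ref{prop: unit group Lie group}. There $A = (\Hom_\KK(\Hopf,B),\star)$ is shown to be a CIA whose unit group $A^\times$ is a $\KK$-analytic BCH--Lie group with Lie algebra $(A,\LB)$ and whose Lie group exponential coincides on its domain with the convolution series $\exp^\star(\phi)=\sum_{n\geq 0}\phi^{\star n}/n!$. Dually, the BCH logarithm $\log^\star$ is a $\KK$-analytic map on an open identity neighborhood $U\subseteq A^\times$.

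First I would verify that $\chA{\Hopf}{B}$ is a closed Lie subalgebra of $A$ and that $\chG{\Hopf}{B}$ is closed in $A^\times$. Both statements are immediate from the fact that the defining conditions (the Leibniz-type identity and the multiplicativity identity respectively) are given by closed linear equations in the topology of pointwise convergence; the Lie subalgebra property itself is Lemma \ref{lem: LGLA}. The algebraic formula recalled between Definition \ref{defn: char} and Proposition \ref{prop: topgp} then yields the inclusion $\exp^\star(\chA{\Hopf}{B}) \subseteq \chG{\Hopf}{B}$.

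The key analytic step is the converse at the identity: I would prove that there exists an identity neighborhood $V\subseteq U$ with $\log^\star(V\cap\chG{\Hopf}{B})\subseteq\chA{\Hopf}{B}$. For $\phi=\exp^\star(\psi)\in V$ a character, one expands $\psi=\log^\star(\phi)$ as a convergent power series in $\phi-\one\in A$ and, using that $\Delta_\Hopf$ is an algebra morphism and that $\phi$ is multiplicative, checks term by term that $\psi$ satisfies the infinitesimal-character condition. This is the formal dual of the classical group-like/primitive correspondence in Hopf algebra theory and was carried out in \cite{BDS16} in the connected case; since the manipulations only involve the coalgebra structure of $\Hopf$ and evaluation in $B$, they transfer verbatim to the present setting once one knows that $\exp^\star$ and $\log^\star$ are mutually inverse analytic maps on a neighborhood of $\one$, which is exactly the content of Proposition \ref{prop: unit group Lie group}.

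Assembling the two inclusions produces a $\KK$-analytic diffeomorphism between identity neighborhoods in $\chA{\Hopf}{B}$ and $\chG{\Hopf}{B}$, whose inverse serves as a submanifold chart for $\chG{\Hopf}{B}$ inside $A^\times$. Left translates of this chart give a smooth atlas on $\chG{\Hopf}{B}$ making it a closed $\KK$-analytic Lie subgroup of $A^\times$; because the BCH series converges on the chart domain in $A$ and takes values in the closed Lie subalgebra $\chA{\Hopf}{B}$, it also defines the group product in $\chG{\Hopf}{B}$ near the identity, so $\chG{\Hopf}{B}$ is itself a BCH--Lie group and the chart is the local exponential, proving (ii). The Lie algebra of $\chG{\Hopf}{B}$ is the tangent space at $\one$ to this submanifold, namely $\chA{\Hopf}{B}$ with bracket the restriction of $\LB$, proving (i). The main obstacle is the logarithm-to-infinitesimal-character step, and it is there that one must carefully control convergence in the topology of pointwise convergence, which is precisely what the CIA hypothesis on $B$ together with $\dim\Hopf_0<\infty$ is designed to deliver via Lemma \ref{lem: cia0}.
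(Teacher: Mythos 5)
Your overall architecture matches the paper's: embed $\chG{\Hopf}{B}$ as a closed subgroup of the BCH--Lie group $A^\times$ from Proposition \ref{prop: unit group Lie group}, show that $\exp_A$ maps infinitesimal characters to characters, establish the converse locally near the identity, and conclude. The two differences are in execution. First, for the concluding step the paper does not build charts by hand but computes the set $\Lf^e(\chG{\Hopf}{B}) = \{x \in A \mid \exp_A(\RR x)\subseteq \chG{\Hopf}{B}\}$, shows it equals $\chA{\Hopf}{B}$ using the local bijectivity, and then invokes the closed-subgroup criterion for locally exponential Lie groups \cite[Theorem IV.3.3]{MR2261066}; your hand-constructed atlas amounts to reproving that criterion in this special case, which is more work but not wrong (and you are right that closedness alone is not enough in infinite dimensions, so some such argument is genuinely needed).

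The second difference is where your proposal is thin. You claim that the logarithm-to-infinitesimal-character verification from \cite{BDS16} ``transfers verbatim'' once one knows $\exp^\star$ and $\log^\star$ are locally inverse. It does not: in the connected graded case the element $\phi - \one$ vanishes on $\Hopf_0$, so its convolution powers are locally nilpotent and the logarithm series is pointwise \emph{finite}, which is what makes the term-by-term formal manipulation legitimate. With $\dim \Hopf_0 < \infty$ but $\Hopf_0 \neq \KK\one$, the series $\log^\star(\phi) = \sum_n \frac{(-1)^{n+1}}{n}(\phi-\one)^{\star n}$ is a genuinely infinite sum on every element of $\Hopf_0$, and checking the Leibniz identity term by term requires rearranging infinite double series in $B$ -- a convergence argument you would have to supply, not inherit. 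This is exactly why the paper's Lemma \ref{lem: exp:bij} (whose footnote makes this point explicitly) replaces the functional calculus of \cite{BDS16} by a different mechanism: it packages the character condition as the equation $(m_\Hopf)^*(\phi) = \theta(\phi)$ between Lie group morphisms into $A_\otimes^\times = \Hom_\KK(\Hopf\otimes\Hopf,B)^\times$, uses naturality of the Lie group exponential under these morphisms to get the forward implication with no series manipulation at all, and then upgrades it to a local equivalence using only the local injectivity of $\exp_{A_\otimes}$ on a suitable $0$-neighbourhood. Your route can likely be repaired (e.g.\ by a holomorphic functional calculus argument in the Mackey complete CIA $A$), but as written the central step rests on an analogy that the non-connectedness breaks.
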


  \begin{proof}
   Recall from Proposition \ref{prop: unit group Lie group} and Proposition \ref{prop: topgp} that $\chG{\Hopf}{B}$ is a closed subgroup of the locally exponential Lie group $(A^\times, \star)$.
   We will now establish the Lie group structure using a well-known criterion for locally exponential Lie groups: Let $\exp_A$ be the Lie group exponential of $A^\times$ and consider the subset
   $$\Lf^e (\chG{\Hopf}{B}) := \{ x\in \Lf (A^\times) = A \mid \exp_{A} (\RR x) \subseteq \chG{\Hopf}{B}\}.$$
   We establish in Lemma \ref{lem: exp:bij} that $\chA{\Hopf}{B}$ is mapped by $\exp_{A}$ to $\chG{\Hopf}{B}$, whence $\chA{\Hopf}{B} \subseteq \Lf^e (\chG{\Hopf}{B})$.
   To see that $\Lf^e (\chG{\Hopf}{B})$ only contains infinitesimal characters, recall from Lemma \ref{lem: exp:bij} that there is an open $0$-neighborhood $\Omega \subseteq A$ such that $\exp_{A}$ maps $\chA{\Hopf}{B} \cap \Omega$ bijectively to $\exp_{A} (\Omega) \cap \chG{\Hopf}{B}$.
   If $x \in \Lf^e (\chG{\Hopf}{B}$ then we can pick $t > 0$ so small that $tx \in \Omega$. By definition of $\Lf^e (\chG{\Hopf}{B})$ we see that then $\exp_{A} (tx) \in \chG{\Hopf}{B} \cap \exp_{A} (\Omega)$.
   Therefore, we must have $tx \in \Omega \cap \chA{\Hopf}{B}$, whence $x \in \chA{\Hopf}{B}$.
   This entails that $\Lf^e (\chG{\Hopf}{B}) = \chA{\Hopf}{B}$ and then \cite[Theorem IV.3.3]{MR2261066} implies that $\chG{\Hopf}{B}$ is a locally exponential closed Lie subgroup of $(A^\times, \star)$ whose Lie algebra is the Lie algebra of infinitesimal characters $\chA{\Hopf}{B}$.
   Moreover, since $(A^\times ,\star)$ is a BCH--Lie group, so is $\chG{H}{B}$ (cf.\ \cite[Definition IV.1.9]{MR2261066}).
  \end{proof}

 Note that the Lie group $\chG{\Hopf}{B}$ constructed in Theorem \ref{thm: Char:Lie} will be modelled on a \Frechet space if $\Hopf$ is of countable dimension (e.g.\ if $\Hopf$ is of finite type) and $B$ is in addition a \Frechet algebra.
 If $\Hopf$ is even finite-dimensional and $B$ is a Banach algebra, then $\chG{\Hopf}{B}$ will even be modelled on a Banach space.

 \begin{ex}
   \begin{enumerate}
    \item The characters of a Hopf algebra of finite-type, i.e.\ the components $\Hopf_n$ in the grading $\Hopf = \oplus_{n\in\NN_0} \Hopf_n$ are finite-dimensional, are infinite-dimensional Lie groups by Theorem \ref{thm: Char:Lie}. Most natural examples of Hopf algebras appearing in combinatorial contexts are of finite-type.
    \item Every finite-dimensional Hopf algebra $\Hopf$ can be endowed with the trivial grading $\Hopf_0 := \Hopf$. Thus Theorem \ref{thm: Char:Lie} implies that character groups (with values in a commutative CIA) of finite-dimensional Hopf algebras (cf.\ \cite{MR2540955} for a survey) are infinite-dimensional Lie groups.
    \item Graded and connected Hopf algebras (see next section) appear in the Connes-Kreimer theory of perturbative renormalisation of quantum field theories.
    However, recently in \cite{MR3395225} it has been argued that instead of the graded and connected Hopf algebra of Feynman graphs considered traditionally (see e.g.\ the exposition in \cite{MR2371808}) a non connected extension of this Hopf algebra should be used.
    The generalisation of the Hopf algebra then turns out to be a Hopf algebra with $\text{dim } \Hopf_0 < \infty$, whence its character groups with values in a Banach algebra turn out to be infinite-dimensional Lie groups.
   \end{enumerate}
  These classes of examples could in general not be treated by the methods developed in \cite{BDS16}.
 \end{ex}

 \begin{rem}
  Recall that by definition of a graded Hopf algebra $\Hopf = \bigoplus_{n \in \NN_0} \Hopf_n$, the Hopf algebra structure turns $\Hopf_0$ into a sub-Hopf algebra. Therefore, we always obtain two Lie groups $\chG{\Hopf}{B}$ and $\chG{\Hopf_0}{B}$ if $\text{dim } \Hopf_0 < \infty$.
  It is easy to see that the restriction map $q \colon \chG{\Hopf}{B} \rightarrow \chG{\Hopf_0}{B} , \phi \mapsto \phi|_{\Hopf_0}$ is a morphism of Lie groups with respect to the Lie group structures from Theorem \ref{thm: Char:Lie}.
  Its kernel is the normal subgroup $$\text{ker} q = \{ \phi \in \chG{\Hopf}{B} \mid \phi|_{\Hopf_0} = \one|_{\Hopf_0}\},$$
  i.e.\ the group of characters which coincide with the unit on degree $0$.
 \end{rem}

 We now turn to the question whether the Lie group constructed in Theorem \ref{thm: Char:Lie} is a \emph{regular} Lie group.
 %To this end, let us first recall the notion of regularity in the sense of Milnor.

  \begin{thm}													\label{thm: Char_regular}
   Let $\Hopf$ be a graded Hopf algebra with $\text{dim }\Hopf_0 < \infty$ and $B$ be a Banach algebra.
   Then the Lie group $\chG{\Hopf}{B}$ is $C^0$-regular and the associated evolution map is even a $\KK$-analytic map.
   \end{thm}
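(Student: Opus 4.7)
The plan is to realise $\chG{\Hopf}{B}$ as a closed Lie subgroup of the ambient $C^0$-regular Lie group $A^\times$ with $A=\Hom_\KK(\Hopf,B)$ and to transfer regularity from the ambient group. Since $B$ is a Banach algebra, it is in particular a sequentially complete commutative CIA, so Proposition \ref{prop: unit group Lie group} supplies $A^\times$ with a $C^0$-regular Lie group structure whose evolution map $\mathrm{evol}_{A^\times}\colon C^0([0,1],A)\to A^\times$ is $\KK$-analytic, while Theorem \ref{thm: Char:Lie} identifies $\chG{\Hopf}{B}$ as a locally exponential closed Lie subgroup of $A^\times$ with Lie algebra $\chA{\Hopf}{B}$. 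By a standard Lie-theoretic argument, it suffices to verify the following invariance claim: for every continuous curve $\gamma\colon[0,1]\to\chA{\Hopf}{B}$, the $A^\times$-evolution $\eta(t):=\mathrm{Evol}_{A^\times}(\gamma)(t)$ takes values in $\chG{\Hopf}{B}$. Granting this, $\mathrm{evol}_{A^\times}$ corestricts to a $\KK$-analytic map $C^0([0,1],\chA{\Hopf}{B})\to\chG{\Hopf}{B}$, using that $\chG{\Hopf}{B}$ carries the subspace topology and $C^0([0,1],\chA{\Hopf}{B})$ is a closed topological vector subspace of $C^0([0,1],A)$, whence $C^0$-regularity of $\chG{\Hopf}{B}$ with $\KK$-analytic evolution follows.

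To prove invariance, I would fix $x,y\in\Hopf$ and study the $B$-valued function $g_{x,y}(t):=\eta(t)(xy)-\eta(t)(x)\eta(t)(y)$, which measures the failure of $\eta(t)$ to be multiplicative on $(x,y)$. Differentiating using $\eta'(t)=\eta(t)\star\gamma(t)$, expanding $\Delta(xy)=\sum_{(x)(y)} x_{(1)}y_{(1)}\tensor x_{(2)}y_{(2)}$, inserting the infinitesimal-character identity $\gamma(t)(ab)=\varepsilon_\Hopf(a)\gamma(t)(b)+\gamma(t)(a)\varepsilon_\Hopf(b)$, and collapsing with the counit identities $\sum_{(x)} x_{(1)}\varepsilon_\Hopf(x_{(2)})=x$ (and symmetrically for $y$) yields the linear evolution equation
\begin{equation*}
g'_{x,y}(t)=\sum_{(y)} g_{x,y_{(1)}}(t)\,\gamma(t)(y_{(2)}) \,+\, \sum_{(x)} g_{x_{(1)},y}(t)\,\gamma(t)(x_{(2)})
\end{equation*}
with initial value $g_{x,y}(0)=\varepsilon_\Hopf(xy)-\varepsilon_\Hopf(x)\varepsilon_\Hopf(y)=0$, since $\eta(0)=u_B\circ\varepsilon_\Hopf$.

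To finish, I would invoke the fundamental theorem of coalgebras (as already used in the proof of Lemma \ref{lem: L1regular}): the subcoalgebras $C_x\ni x$ and $C_y\ni y$ of $\Hopf$ are finite-dimensional, and since $\Delta(C_x)\subseteq C_x\tensor C_x$ (and analogously for $C_y$), the Sweedler components $y_{(1)}$ and $x_{(1)}$ appearing on the right-hand side again lie in $C_y$ and $C_x$. Hence the system indexed by $(x',y')\in C_x\times C_y$ is self-contained and, after a choice of bases, reduces to a linear ODE on the Banach space $B^N$ with $N=(\dim C_x)(\dim C_y)$ whose time-dependent generator is continuous (multiplication in $B$ is continuous, and each evaluation $t\mapsto\gamma(t)(z)$ is continuous by the topology of pointwise convergence). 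Picard--Lindel\"{o}f then forces $g_{x,y}\equiv0$, so $\eta(t)\in\chG{\Hopf}{B}$ for every $t$. The main obstacle is precisely this reduction: the invariance of the character property along the flow translates into an \emph{a priori} infinite system of linear ODEs, and the Hopf algebra structure (beyond the regularity of $A^\times$) is needed to close it into a finite-dimensional Banach-space ODE on which uniqueness applies.
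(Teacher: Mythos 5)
Your proposal is correct, but it takes a genuinely different route from the paper. The paper also reduces to the $C^0$-regularity of the ambient unit group $A^\times$ (Proposition \ref{prop: unit group Lie group}), but then exhibits $\chG{\Hopf}{B}$ as the coincidence set $\{\phi\in A^\times \mid (m_\Hopf)^*(\phi)=\theta_{A^\times}(\phi)\}$ of two Lie group morphisms into $A_\otimes^\times$ (precomposition with $m_\Hopf$ and $\phi\mapsto m_B\circ(\phi\otimes\phi)$) and invokes the general transfer result \cite[Theorem G]{1208.0715v3} for closed subgroups of this form. You instead prove the key invariance statement by hand: the defect $g_{x,y}(t)=\eta(t)(xy)-\eta(t)(x)\eta(t)(y)$ satisfies a linear evolution equation (your computation is right; note it uses the commutativity of $B$ to merge the product-rule terms, and the degenerate case $x=y=\one_\Hopf$ is harmless since the paper's Definition \ref{defn: char} only requires multiplicativity), and the fundamental theorem of coalgebras closes this \emph{a priori} infinite system into a linear ODE on $B^N$, where uniqueness forces $g\equiv 0$. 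In effect you are unwinding the proof of the cited Theorem G in this concrete setting. What the paper's route buys is brevity and uniformity --- the same coincidence-set description feeds directly into the $L^1$-regularity argument of Remark \ref{rem: L1reg} --- while your route is self-contained and makes the mechanism (why characters are flow-invariant under infinitesimal characters) completely explicit. The only step you should make explicit is the one you label ``standard'': that a $C^1$-curve (resp.\ analytic map) into $A^\times$ with values in $\chG{\Hopf}{B}$ is $C^1$ (resp.\ analytic) into $\chG{\Hopf}{B}$; this holds because Theorem \ref{thm: Char:Lie} realises $\chG{\Hopf}{B}$ as a locally exponential closed subgroup whose charts are restrictions of $\exp_A^{-1}$ landing in the closed subspace $\chA{\Hopf}{B}\subseteq A$, so differentiability can be tested in the ambient space.
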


  \begin{proof}
   In Theorem \ref{thm: Char:Lie} the Lie group structure on the character group was identified as a closed subgroup of the Lie group $A^\times$.
   By definition of the character group (cf.\ Definition \eqref{defn: char}), $\chG{\Hopf}{B}$ can be described as
   \begin{displaymath}
    \chG{\Hopf}{B} = \{\phi \in A^\times \mid \phi \circ m_\Hopf = m_B \circ (\phi \otimes \phi) \}
   \end{displaymath}
   As discussed in Remark \ref{setup: dual_comult} Equation \eqref{eq: precomp:mH}, the map $(m_\Hopf)^* \colon A^\times \rightarrow A_{\otimes}^\times, \phi \mapsto \phi \circ m_\Hopf$ is a Lie group morphism.
   Now we consider the map $\theta \colon A \rightarrow A_{\otimes} , \phi \mapsto m_B \circ (\phi \otimes \phi)$.
   Observe that $\theta = \beta \circ \text{diag}$, where $\beta$ is the continuous bilinear map \eqref{eq: beta} and $\text{diag} \colon A \rightarrow A \times A , \phi \mapsto (\phi , \phi)$.
   Since $\beta$ is smooth (as a continuous bilinear map), and the diagonal map is clearly smooth, $\theta$ is smooth.
   Further, \eqref{eq: multifalt} shows that $\theta (\phi \star \psi) = \theta(\phi) \star_{At} \theta(\psi)$.
   Thus $\theta$ restricts to a Lie group morphism $\theta_{A^\times} \colon A^\times \rightarrow At^\times$.

   Summing up, we see that $\chG{\Hopf}{B} = \{ \phi \in A^\times \mid (m_\Hopf)^*(\phi) = \theta_{A^\times}(\phi) \}$.
   Since by Proposition \ref{prop: unit group Lie group} the Lie group $A^\times$ is $C^0$-regular, the closed subgroup $\chG{\Hopf}{B}$ is also $C^0$-regular by \cite[Theorem G]{1208.0715v3}.
  \end{proof}

 \begin{rem}\label{rem: L1reg}
  In Theorem \ref{thm: Char_regular} we have established that the character group $\chG{\Hopf}{B}$ inherits the regularity properties of the ambient group of units.
  If the Hopf algebra $\Hopf$ is in addition of countable dimension, e.g.\  a Hopf algebra of finite type, then Lemma \ref{lem: L1regular} asserts that the ambient group of units is even $L^1$-regular.
  Unfortunately, Theorem \cite[Theorem G]{1208.0715v3} only deals with regularity of type $C^k$ for $k \in \NN_0 \cup \{\infty\}$.
  However, since $\chG{\Hopf}{B}$ is a closed Lie subgroup of $\Hom_\KK (\Hopf,B)^\times$, it is easy to see that the proof of \cite[Theorem G]{1208.0715v3} carries over without any changes to the $L^1$-case.\footnote{One only has to observe that a function into the Lie subgroup is absolutely continuous if and only if it is absolutely continuous as a function into the larger group. On the author's request, a suitable version of \cite[Theorem G]{1208.0715v3} for $L^1$-regularity will be made available in a future version of \cite{1601.02568v1}.}
  Hence, we can adapt the proof of Theorem \ref{thm: Char_regular} to obtain the following statement:
  \smallskip

  \textbf{Corollary} \emph{Let $\Hopf$ be a graded Hopf algebra of countable dimension with $\text{dim }\Hopf_0 < \infty$ and $B$ be a Banach algebra.
   Then the Lie group $\chG{\Hopf}{B}$ is $L^1$-regular.}
 \end{rem}

  Note that the results on $L^1$-regularity of character groups considerably strengthen the results which have been available for regularity of character groups (see \cite{BDS16}).

\subsection{Character groups for a graded and connected Hopf algebra $\Hopf$ and $B$ a locally convex algebra}\label{sect: conn}

In many interesting cases, the Hopf algebra is even connected graded (i.e. $\Hopf_0$ is one-dimensional). In this case, we can weaken the assumption on $B$ to be only locally convex.

 \begin{thm}[{\cite[Theorem 2.7]{BDS16}}]\label{thm: main}
 Let $\Hopf$ be a graded and connected Hopf algebra and $B$ be a locally convex algebra.
 Then the manifold structure induced by the global parametrisation $\exp \colon \chA{\Hopf}{B} \rightarrow \chG{\Hopf}{B},\ x \mapsto \sum_{n \in \NN_0} \frac{x^{\star_n}}{n!}$ turns $\chG{\Hopf}{B}$ into a $\KK$-analytic Lie group.

 The Lie algebra associated to this Lie group is $\chA{\Hopf}{B}$.
 Moreover, the Lie group exponential map is given by the real analytic diffeomorphism $\exp$. %from Lemma \ref{lem: exp:ana}.
 \end{thm}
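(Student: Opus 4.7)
The plan is to use the convolution exponential $\exp^\star$ as a global $\KK$-analytic chart identifying $\chA{\Hopf}{B}$ with $\chG{\Hopf}{B}$, and to transport the group operations across this chart. Connectedness forces $\Hopf_0 = \KK\one_\Hopf$, so every $\phi \in \chA{\Hopf}{B}$ vanishes on $\one_\Hopf$ (apply the Leibniz rule to $\one_\Hopf \cdot \one_\Hopf$) and every character takes the value $\one_B$ there; consequently, for a character $\chi$, the difference $\chi - \one_A$ (where $\one_A := u_B \circ \varepsilon_\Hopf$) also vanishes on $\Hopf_0$. The key nilpotency observation is: for any $\phi \in \Hom_\KK(\Hopf,B)$ with $\phi(\one_\Hopf)=0$ and any $h \in \Hopf_n$ one has $\phi^{\star k}(h) = 0$ for $k > n$. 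Indeed, $\phi^{\star k}(h) = m_B^{(k)} \circ \phi^{\otimes k} \circ \Delta^{k-1}(h)$, and $\Delta^{k-1}(h)$ lives in the sum of the $\Hopf_{i_1} \otimes \cdots \otimes \Hopf_{i_k}$ with $i_1 + \cdots + i_k = n$; for $k > n$ some $i_j$ is zero, and $\phi$ annihilates that tensor factor.

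Consequently, both series
\[
\exp^\star(\phi) \;=\; \sum_{k\ge 0} \frac{\phi^{\star k}}{k!}, \qquad \log^\star(\one_A + \psi) \;=\; \sum_{k\ge 1} \frac{(-1)^{k-1}}{k}\,\psi^{\star k}
\]
reduce to finite sums on every homogeneous element, hence converge in the topology of pointwise convergence, and the standard formal-series identities show they are mutually inverse. Using that $\exp^\star$ maps infinitesimal characters to characters (stated in the paper after Lemma \ref{lem: LGLA}) together with the analogous Leibniz check showing that $\log^\star$ sends characters to infinitesimal characters, one obtains a bijection $\exp^\star \colon \chA{\Hopf}{B} \to \chG{\Hopf}{B}$. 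Through the identification $\Hom_\KK(\Hopf,B) \cong \prod_n \Hom_\KK(\Hopf_n,B)$ from Definition \ref{setup: graded}, each component $\pi_n \circ \exp^\star$ is a finite sum of continuous multilinear expressions in the finitely many components of $\phi$ in degrees $\le n$, hence polynomial; componentwise polynomiality then yields $\KK$-analyticity of $\exp^\star$ and of $\log^\star$.

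Installing $\exp^\star$ as a global chart produces a manifold structure on $\chG{\Hopf}{B}$ whose tangent space at $\one_A$ is $\chA{\Hopf}{B}$, and the Lie group exponential is $\exp^\star$ by construction. The main obstacle is verifying $\KK$-analyticity of the group operations in this chart. Inversion transports to $\phi \mapsto -\phi$, because $\phi$ commutes with its own convolution powers so $\exp^\star(-\phi) \star \exp^\star(\phi) = \one_A$. Multiplication transports to the map $(\phi,\psi) \mapsto \log^\star\!\bigl(\exp^\star(\phi) \star \exp^\star(\psi)\bigr)$, and the same degree-bookkeeping applies: for each $n$, the projection $\pi_n$ of this composite is a finite sum of continuous multilinear terms in the components of $\phi$ and $\psi$ in degrees $\le n$, hence polynomial, hence $\KK$-analytic componentwise. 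Finally, differentiating the transported multiplication twice at the origin gives the commutator $\LB[\phi,\psi] = \phi \star \psi - \psi \star \phi$, completing the identification of $\Lf(\chG{\Hopf}{B})$ with $\chA{\Hopf}{B}$.
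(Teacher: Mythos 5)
Your argument is correct and follows essentially the same route as the proof the paper relies on: Theorem \ref{thm: main} is quoted from \cite[Theorem 2.7]{BDS16}, whose proof likewise exploits that connectedness makes $\exp^{\star}$ and $\log^{\star}$ degreewise polynomial (locally finite), mutually inverse analytic maps, packaged there as a functional calculus on the densely graded algebra $\Hom_\KK(\Hopf,B)$. Your degree-by-degree nilpotency bookkeeping is the same idea in more elementary form, so there is nothing substantive to add.
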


 \begin{rem}
 Note that Theorem \ref{thm: main} yields more information for the graded and connected case then the new results discussed in Section \ref{sect: noncon}: In the graded and connected case, the Lie group exponential is a global diffeomorphism, whereas the theorem for the non-connected case only establishes that $\exp$ induces a diffeomorphism around the unit.
 \end{rem}

 Note that the connectedness of the Hopf algebra is the important requirement here.
 Indeed, we can drop the assumption of an integer grading and generalise to the grading by a suitable monoid.

 \begin{defn}
  Let $M$ be a submonoid of $(\RR, +)$ with $0 \in M$.
  We call $M$ an \emph{index monoid} if every initial segment $I_m := \{n \in M \mid n \leq m\}$ is finite.
  Note that this forces the monoid to be at most countable.

  As in the $\NN_0$ graded case, we say a Hopf algebra $\Hopf$ is \emph{graded by an index monoid} $M$, if $\Hopf = \bigoplus_{m \in M} \Hopf_m$ and the algebra, coalgebra and antipode respect the grading in the usual sense.
 \end{defn}

 \begin{ex}
  The monoid $(\NN_0,+)$ is an index monoid.

  A source for (more interesting) index monoids is Hairer's theory of regularity structures for locally subcritical semilinear stochastic partial differential equations \cite[Section 8.1]{MR3274562} (cf.\ in particular \cite[Lemma 8.10]{MR3274562} and see Example \ref{ex: Hairer} below).
 \end{ex}

   Note that the crucial property of an index monoid is the finiteness of initial segments. This property allows one to define the functional calculus used in the proof of \cite[Theorem B]{BDS16} to this slightly more general setting.
   Changing only trivial details in the proof of loc.cit., one immediately deduces that the statement of Theorem \ref{thm: main} generalises from an $\NN_0$-grading to the more general situation
   \begin{cor}\label{cor: igraded}
    Let $\Hopf = \bigoplus_{m \in M} \Hopf_m$ be a graded and connected Hopf algebra graded by an index monoid $M$, $B$ a sequentially complete locally convex algebra.
    Then the manifold structure induced by $\exp \colon \chA{\Hopf}{B} \rightarrow \chG{\Hopf}{B},\ x \mapsto \sum_{n \in \NN_0} \frac{x^{\star_n}}{n!}$ turns $\chG{\Hopf}{B}$ into a $\KK$-analytic Lie group.
    This Lie group is $C^0$-regular, its Lie algebra is $\chA{\Hopf}{B}$ and the Lie group exponential map is the real analytic diffeomorphism $\exp$.
   \end{cor}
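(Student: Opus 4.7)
The plan is to transcribe the proof of Theorem \ref{thm: main} essentially verbatim, checking that the only use made of the $\NN_0$-grading is a combinatorial finiteness property shared by every index monoid. The first step is an arithmetic lemma: since $0 \in M$ and each initial segment $I_m$ is finite, the positive elements of $M$ admit a strictly positive infimum $\mu$ (otherwise $M \cap (0,m]$ would be infinite for any positive $m \in M$). Consequently, for every $m \in M$ the set of ordered tuples $(n_1,\ldots,n_k)$ with $n_i \in M \setminus \{0\}$ and $\sum n_i = m$ is finite, with $k \leq m/\mu$.

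This finiteness is exactly what the proof of Theorem \ref{thm: main} exploits. Any $\phi \in \chA{\Hopf}{B}$ satisfies $\phi|_{\Hopf_0} = 0$ by connectedness of $\Hopf$; writing $\phi^{\star n}(x) = \mu_B^{(n)} \circ \phi^{\otimes n} \circ \Delta^{n-1}(x)$ for $x \in \Hopf_m$ and using the $M$-grading of $\Delta$ together with Sweedler notation, the result decomposes as a sum indexed by length-$n$ compositions of $m$ in $M\setminus\{0\}$ and therefore vanishes once $n > m/\mu$. Hence $\exp(\phi) = \sum_{n \geq 0} \phi^{\star n}/n!$ is a pointwise-finite sum defining an element of $\chG{\Hopf}{B}$, and the logarithm and Baker--Campbell--Hausdorff series truncate on each $\Hopf_m$ by the same argument. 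The analyticity estimates and the bijectivity of $\exp$ established in \cite[Theorem 2.7]{BDS16} are phrased entirely in terms of these truncations, so they transfer without modification and endow $\chG{\Hopf}{B}$ with a $\KK$-analytic Lie group structure having $\chA{\Hopf}{B}$ as Lie algebra and $\exp$ as a global exponential diffeomorphism.

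For $C^0$-regularity I would solve the defining initial value problem $\eta'(t) = \eta(t) \star \gamma(t)$, $\eta(0) = \varepsilon_\Hopf$, componentwise: on each $\Hopf_m$ the right-hand side couples $\eta|_{\Hopf_m}$ only to the finitely many lower-degree components indexed by $I_m \setminus \{m\}$, so the system is triangular along $I_m$ and can be integrated inductively via Riemann integrals in $B$, for which sequential completeness of $B$ is exactly what is needed; smoothness of $\mathrm{evol}$ in $\gamma$ is then checked componentwise. The only genuine adjustment compared with \cite{BDS16} is that the induction is ordered by the real ordering of $M$ rather than by $\NN_0$, which is harmless since $I_m$ is finite. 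The main obstacle is therefore purely bookkeeping: ensuring that every occurrence of ``partition of $n \in \NN_0$'' in the original argument is consistently replaced by ``$M$-composition of $m$''; this is precisely the ``trivial detail'' alluded to in the excerpt, and no new analytic estimate is required.
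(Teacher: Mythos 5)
Your proposal is correct and follows essentially the same route as the paper, which itself only records that finiteness of initial segments lets the functional calculus of \cite[Theorem B]{BDS16} carry over with trivial changes; your observation that the positive elements of an index monoid are bounded below by some $\mu>0$, so that convolution powers of infinitesimal characters vanish on $\Hopf_m$ once $n>m/\mu$ and all relevant series truncate pointwise, is precisely the ``trivial detail'' being invoked. The componentwise, triangular integration argument for $C^0$-regularity likewise matches the intended adaptation of loc.\ cit.
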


 \subsubsection{Application to character groups in the renormalisation of SPDEs}

  Hopf algebras graded by index monoids appear in Hairer's theory of regularity structures for locally subcritical semilinear SPDEs \cite[Section 8.1]{MR3274562}.
  Character groups of these Hopf algebras (and their quotients) then appear as structure group in Hairer's theory (cf.\ \cite[Theorem 8.24]{MR3274562} and \cite{1610.08468v1}) of regularity structures.
  Recall that a regularity structure $(A,T,G)$ in the sense of Hairer (cf.\ \cite[Definition 2.1]{MR3274562}) is given by
  \begin{itemize}
   \item an index set $A \subseteq \RR$ with $0 \in A$, which is bounded from below and locally finite,
   \item a \emph{model space} $T = \bigoplus_{\alpha \in A} T_\alpha$ which is a graded vector space with $T_0 \cong \RR$ (denote by $1$ its unit element) and $T_\alpha$ a Banach space for every $\alpha \in A$,
   \item a \emph{structure group} $G$ of linear operators acting on $T$ such that, for every $\Gamma \in G$, $\alpha \in A$ and $a \in T_\alpha$ one has
   $$\Gamma a - a \in \bigoplus_{\beta < \alpha} T_\beta \text{ and } \Gamma 1 =1.$$
  \end{itemize}
  We sketch now briefly the theory developed in \cite{1610.08468v1}, where for a class of examples singular SPDEs the structure group was recovered as the character group of a connected Hopf algebra graded by an index monoid.

  \begin{ex}\label{ex: Hairer}
   The construction outlined in \cite{1610.08468v1} builds first a general Hopf algebra of decorated trees in the category of bigraded spaces.   Note that this bigrading does not induce a suitable $\NN_0$-grading (or grading by index monoid) for our purposes.
   This Hopf algebra encodes the combinatorics of Taylor expansions in the SPDE setting and it needs to be tailored to the specific SPDE. This is achieved by choosing another ingredient, a so called \emph{subcritical and complete normal rule}, i.e.\ a device depending on the SDE in question which selects a certain sub-Hopf algebra (see \cite[Section 5]{1610.08468v1} for details).
   Basically, the rule collects all admissible terms (= abstract decorated trees) which appear in the local Taylor expansion of the singular SPDE.\footnote{see \cite[Section 5.5]{1610.08468v1} for some explicit examples of this procedure, e.g.\ for the KPZ equation.}

   Using the rule, we can select an algebra of decorated trees $\mathcal{T}^{ex}_{+}$ admissible with respect to the rule. Here the ``$+$'' denotes that we only select trees which are positive with respect to a certain grading $|\cdot|_+$  (cf.\ \cite[Remark 5.3 and Definition 5.29]{1610.08468v1}).
   Then \cite[Proposition 5.34]{1610.08468v1} shows that $(\mathcal{T}^{ex}_{+}, |\cdot|_+)$ is a graded and connected Hopf algebra of decorated trees.
   Note however, that the grading $|\cdot|_+$ is in general not integer valued, i.e.\ $\mathcal{T}^{ex}_{+}$ is graded by a submonoid $M$ of $[0,\infty[$.

   Since we are working with a normal rule which is complete and subcritical, the submonoid $M$ satisfies $|\{ \gamma \in M \mid \gamma < c\}| < \infty$ for each $c \in \RR$, i.e.\ $M$ is an index monoid. The reason for this is that by construction $\mathcal{T}^{ex}_{+}$ is generated by tree products of trees which strongly conform to the rule \cite[Lemma 5.25 and Definition 5.29]{1610.08468v1}. As the rule is complete and subcritical, there are only finitely many trees $\tau$ with $|\tau|_+ < c $ (for $c \in \RR$) which strongly conform to the rule.
   Now the tree product is the Hopf algebra product (i.e.\ the product respects the grading), whence the poperty for $M$ follows.

   Now by \cite[Proposition 5.39]{1610.08468v1} the graded space $\mathcal{T}^{ex} = (\langle B_0\rangle , | \cdot |_+)$ (suitably generated by a certain subset of the strongly conforming trees) together with the index set $A^{ex} = \{|\tau|_+ \mid \tau \in B_0\}$ and the character group $\mathcal{G}_+^{ex} :=\chG{\mathcal{T}^{ex}_{+}}{\RR}$ form a regularity structure $(A^{ex}, \mathcal{T}^{ex}, \mathcal{G}_+^{ex})$. In conclusion, Corollary \ref{cor: igraded} yields an infinite-dimensional Lie group structure for the structure group $\mathcal{G}_+^{ex}$ of certain subcritical singular SPDEs.
   \end{ex}

  \begin{rem}
   In the full renormalisation procedure outlined in \cite{1610.08468v1} two groups are involved in the renormalisation. Apart from the structure group outlined above, also a so called \emph{renormalisation group} $\mathcal{G}_{-}^\text{ex}$ is used (cf.\ \cite[Section 5]{1610.08468v1}).
   This group $\mathcal{G}_{-}^{ex}$ is (in the locally subcritical case) a finite-dimensional group arising as the character group of another Hopf algebra. However, it turns out that the actions of $\mathcal{G}_{+}^{ex}$ and $\mathcal{G}_{-}^{ex}$ interact in an interesting way (induced by a cointeraction of underlying Hopf algebras, think semidirect product). We hope to return to these actions and use the (infinite-dimensional) Lie group structure to study them in future work.
  \end{rem}

\begin{acknowledgement}
This research was partially supported by the European Unions Horizon 2020 research and innovation
programme under the Marie Sk\l{}odowska-Curie grant agreement No.\ 691070 and by the Knut and Alice Wallenberg Foundation grant agreement KAW~2014.0354.
We are indebted to K.--H.\ Neeb and R.\ Dahmen for discussions which led to Lemma \ref{lem: cia0}.
Further, we would like to thank L.\ Zambotti and Y.\ Bruned for explaining their
results on character groups in the renormalisation of SPDEs.
Finally, we thank K.H.\ Hofmann for encouraging and useful comments and
apologize to him for leaving out \cite{MR3114697} at first.
\end{acknowledgement}

\section{Appendix: Infinite-dimensional calculus}\label{app: calculus}
  In this section basic facts on the differential calculus in infinite-dimensional spaces are recalled.
 The general setting for our calculus are locally convex spaces (see \cite{MR0342978, MR632257}).

 \begin{defn}
  Let $E$ be a topological vector space over $\KK \in \{\RR,\CC \}$.
    $E$ is called \emph{locally convex space} if there is a family $\{p_i \mid i \in I\}$ of continuous seminorms for some index set $I$, such that
      \begin{enumerate}
      \item[i.] the topology is the initial topology with respect to
$\{\text{pr}_{p_i} \colon E \rightarrow E_{p_i} \mid i \in I\}$, i.e.\ the
$E$-valued map $f$ is continuous if and only if $\text{pr}_i \circ f$ is
continuous for each $i\in I$, where $E_{p_i} := E/p_i^{-1} (0)$ is the
normed space associated to the $p_i$ and $\text{pr}_i \colon E \rightarrow E_{p_i}$ is the canonical projection,
      \item[ii.] if $x \in E$ with $p_i (x) = 0$ for all $i \in I$, then $x = 0$ (i.e.\ $E$ is Hausdorff).
     \end{enumerate}
 \end{defn}

 Many familiar results from finite-dimensional calculus carry over to infinite dimensions if we assume that all spaces are locally convex.
% Hence we will only consider topological vector spaces which are locally convex.
% Note that the term \textquotedblleft locally convex\textquotedblright\ comes from the fact that the semi-norm balls
%%   \begin{displaymath}
%%    B_r^{p_i} (x) = \{ y \in E \mid p_i (x-y) < r \} \mbox{ for } i\in I, r > 0 \mbox{ and } x \in E
%%   \end{displaymath}
% form convex neighbourhoods of the points.

  As we are working beyond the realm of Banach spaces, the usual notion of Fr\'{e}chet-differentiability cannot be used.\footnote{The problem here is that the bounded linear operators do not admit a good topological structure if the spaces are not normable. In  particular, the chain rule will not hold for Fr\'{e}chet-differentiability in general for these spaces (cf.\ \cite{keller1974}).}
  Moreover, there are several inequivalent notions of differentiability on locally convex spaces (see again \cite{keller1974}).
  
  We base our investigation on the so called Bastiani calculus, \cite{MR0177277}.
  The notion of differentiability we adopt is natural and quite simple, as the derivative is defined via directional derivatives.

\begin{defn}\label{defn: deriv}
 Let $\KK \in \{\RR,\CC\}$, $r \in \NN \cup \{\infty\}$ and $E$, $F$ locally convex $\KK$-vector spaces and $U \subseteq E$ open.
 Moreover we let $f \colon U \rightarrow F$ be a map.
 If it exists, we define for $(x,h) \in U \times E$ the directional derivative
 $$df(x,h) := D_h f(x) := \lim_{\KK^\times \ni t\rightarrow 0} t^{-1} (f(x+th) -f(x)).$$
 We say that $f$ is $C^r_\KK$ if the iterated directional derivatives
    \begin{displaymath}
     d^{(k)}f (x,y_1,\ldots , y_k) := (D_{y_k} D_{y_{k-1}} \cdots D_{y_1} f) (x)
    \end{displaymath}
 exist for all $k \in \NN_0$ such that $k \leq r$, $x \in U$ and $y_1,\ldots , y_k \in E$ and define continuous maps $d^{(k)} f \colon U \times E^k \rightarrow F$.
 If it is clear which $\KK$ is meant, we simply write $C^r$ for $C^r_\KK$.
 If $f$ is $C^\infty_\CC$, we say that $f$ is \emph{holomorphic} and if $f$ is $C^\infty_\RR$ we say that $f$ is \emph{smooth}.
\end{defn}

For more information on our setting of differential calculus we refer the reader to \cite{MR1911979,keller1974}.
Another popular choice for infinite-dimensional calculus is the so called \textquotedblleft convenient setting\textquotedblright\ of global analysis outlined in \cite{KM97}.
On Fr\'{e}chet spaces (i.e.\ complete metrisable locally convex spaces) our notion of differentiability coincides with differentiability in the sense of convenient calculus. 
Note that differentiable maps in our setting are continuous by default (which is in general not true in the convenient setting).
We encounter analytic mappings between infinite-dimensional spaces, as a preparation for this, note first:

\begin{rem}
 A map $f \colon U \rightarrow F$ is of class $C^\infty_\CC$ if and only if it is \emph{complex analytic} i.e.,
  if $f$ is continuous and locally given by a series of continuous homogeneous polynomials (cf.\ \cite[Proposition 7.4 and 7.7]{MR2069671}).
  We then also say that $f$ is of class $C^\omega_\CC$.
\label{rem: analytic}
\end{rem}

To introduce real analyticity, we have to generalise a suitable characterisation from the finite-dimensional case:
A map $\RR \rightarrow \RR$ is real analytic if it extends to a complex analytic map $\CC \supseteq U \rightarrow \CC$ on an open $\RR$-neighbourhood $U$ in $\CC$.
We proceed analogously for locally convex spaces by replacing $\CC$ with a suitable complexification.

\begin{defn}[Complexification of a locally convex space]
 Let $E$ be a real locally convex topological vector space.
 Endow $E_\CC := E \times E$ with the following operation
 \begin{displaymath}
  (x+iy).(u,v) := (xu-yv, xv+yu) \quad \mbox{ for } x,y \in \RR, u,v \in E.
 \end{displaymath}
 The complex vector space $E_\CC$ with the product topology is called the \emph{complexification} of $E$. We identify $E$ with the closed real subspace $E\times \{0\}$ of $E_\CC$.
\end{defn}

\begin{defn}
 Let $E$, $F$ be real locally convex spaces and $f \colon U \rightarrow F$ defined on an open subset $U$.
 Following \cite{MR830252} and \cite{MR1911979}, we call $f$ \emph{real analytic} (or $C^\omega_\RR$) if $f$ extends to a $C^\infty_\CC$-map $\tilde{f}\colon \tilde{U} \rightarrow F_\CC$ on an open neighbourhood $\tilde{U}$ of $U$ in the complexification $E_\CC$.\footnote{If $E$ and $F$ are Fr\'{e}chet spaces, real analytic maps in the sense just defined coincide with maps which are continuous and can be locally expanded into a power series. See \cite[Proposition 4.1]{MR2402519}.}
\end{defn}

Note that many of the usual results of differential calculus carry over to our setting.
In particular, maps on connected domains whose derivative vanishes are constant as a version of the fundamental theorem of calculus holds.
Moreover, the chain rule holds in the following form:

\begin{lem}[{Chain Rule {\cite[Propositions 1.12, 1.15, 2.7 and 2.9]{MR1911979}}}]
 Fix $k \in \NN_0 \cup \{\infty , \omega\}$ and $\KK \in \{\RR , \CC\}$ together with $C^k_\KK$-maps $f \colon E \supseteq U \rightarrow F$ and $g \colon H \supseteq V \rightarrow E$ defined on open subsets of locally convex spaces.
 Assume that $g (V) \subseteq U$. Then $f\circ g$ is of class $C^{k}_\KK$ and the first derivative of $f\circ g$ is given by
  \begin{displaymath}
   d(f\circ g) (x;v) = df(g(x);dg(x,v)) \quad  \mbox{for all } x \in U ,\ v \in H.
  \end{displaymath}
\end{lem}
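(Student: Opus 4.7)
The plan is to prove the $C^1_\KK$ chain rule directly via a secant-map characterization, bootstrap to all $k\in\NN_0\cup\{\infty\}$ by induction on $k$, and handle the analytic case $k=\omega$ by complexification.

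For $k=1$, introduce for any $C^1_\KK$-map $h\colon U\to F$ the \emph{secant map} $h^{[1]}$ on $U^{[1]}:=\{(x,v,t)\in U\times E\times\KK:x+tv\in U\}$ defined by $h^{[1]}(x,v,t):=t^{-1}(h(x+tv)-h(x))$ for $t\neq 0$ and $h^{[1]}(x,v,0):=dh(x,v)$. A basic lemma, provable from Definition \ref{defn: deriv} using joint continuity of $dh$ on $U\times E$, says that $h$ is $C^1_\KK$ precisely when $h^{[1]}$ is continuous on $U^{[1]}$. For $t\neq 0$ one then computes
\begin{displaymath}
(f\circ g)^{[1]}(x,v,t)=f^{[1]}\bigl(g(x),\,g^{[1]}(x,v,t),\,t\bigr),
\end{displaymath}
and this identity extends by continuity to $t=0$, evaluating there to $df(g(x),dg(x,v))$. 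Since the right-hand side is a composition of continuous maps, $(f\circ g)^{[1]}$ is continuous, so $f\circ g\in C^1_\KK$ with the claimed derivative formula.

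The inductive step from $C^{k-1}_\KK$ to $C^k_\KK$ (for $k\ge 2$; the case $k=0$ is trivial) uses the further fact that $h\in C^k_\KK$ iff $h^{[1]}\in C^{k-1}_\KK$. Granting the chain rule through class $C^{k-1}_\KK$ and $f,g\in C^k_\KK$, both $f^{[1]}$ and $g^{[1]}$ are $C^{k-1}_\KK$, and the identity above exhibits $(f\circ g)^{[1]}$ as a composition of $C^{k-1}_\KK$-maps, which is again $C^{k-1}_\KK$ by the inductive hypothesis. Hence $f\circ g\in C^k_\KK$. The case $k=\infty$ follows since $C^\infty_\KK=\bigcap_{k\in\NN_0}C^k_\KK$. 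For $\KK=\CC$ and $k=\omega$, Remark \ref{rem: analytic} identifies $C^\omega_\CC$ with $C^\infty_\CC$, so nothing more is needed. For $\KK=\RR$ and $k=\omega$, choose complex analytic extensions $\tilde f\colon\tilde U\to F_\CC$ and $\tilde g\colon\tilde V\to E_\CC$ of $f$ and $g$ on open neighbourhoods of $U$, $V$ in the complexifications; shrinking $\tilde V$ via continuity of $\tilde g$ to ensure $\tilde g(\tilde V)\subseteq\tilde U$, the already-proven complex case yields $\tilde f\circ\tilde g\in C^\infty_\CC(\tilde V,F_\CC)$, and this extends $f\circ g$, witnessing $f\circ g\in C^\omega_\RR$.

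The main obstacle is the secant-map characterization of $C^1_\KK$-differentiability: proving that continuity of $h^{[1]}$ on $U^{[1]}$ is equivalent to $h$ being $C^1_\KK$ must be carried out without appealing to any integration theory on $F$, since $F$ is only assumed locally convex. Once this characterization (and its higher-order analogue for $C^k_\KK$) is in place, the chain rule becomes an almost formal consequence of the identity $(f\circ g)^{[1]}=f^{[1]}\circ\Phi$, where $\Phi(x,v,t)=(g(x),g^{[1]}(x,v,t),t)$, together with preservation of continuity and $C^{k-1}_\KK$-regularity under composition. A subsidiary difficulty in the real-analytic case is arranging compatible domains for the complex extensions, resolved by shrinking to sufficiently small open neighbourhoods in $E_\CC$ and $H_\CC$.
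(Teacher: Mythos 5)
The paper does not prove this lemma; it is quoted from Gl\"ockner \cite[Propositions 1.12, 1.15, 2.7 and 2.9]{MR1911979}, and your secant-map argument is essentially the proof that underlies those citations (the Bertram--Gl\"ockner--Neeb ``$h^{[1]}$'' calculus), so the approach is the right one and the composition identity $(f\circ g)^{[1]}(x,v,t)=f^{[1]}\bigl(g(x),g^{[1]}(x,v,t),t\bigr)$, together with the induction on $k$ and the complexification step for $k=\omega$, is correct. The whole weight of the proof, however, sits in the two characterization lemmas you defer: that a Bastiani $C^1_\KK$-map has a \emph{continuous} extension of its difference quotient to $t=0$, and that $h\in C^k_\KK$ iff $h^{[1]}\in C^{k-1}_\KK$. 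Contrary to your stated hope of avoiding integration, the standard proof of the first of these does use the mean value theorem in the form $h^{[1]}(x,v,t)=\int_0^1 dh(x+stv,v)\,ds$; the point is that this weak (Riemann) integral exists in $F$ without any completeness hypothesis because its value is already known to be the element $t^{-1}(h(x+tv)-h(x))\in F$ for $t\neq 0$, and Hahn--Banach gives $p\bigl(\int_0^1\gamma\bigr)\leq\sup_s p(\gamma(s))$ for every continuous seminorm $p$, from which joint continuity at $t=0$ follows from continuity of $dh$. With that (standard) ingredient supplied, your proof is complete and is not a genuinely different route from the cited one.
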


The differential calculus developed so far extends easily to maps which are defined on non-open sets.
This situation occurs frequently in the context of differential equations on closed intervals (see \cite{alas2012} for an overview).

Having the chain rule at our disposal we can define manifolds and related constructions which are modelled on locally convex spaces.

\begin{defn} Fix a Hausdorff topological space $M$ and a locally convex space $E$ over $\KK \in \{\RR,\CC\}$.
An ($E$-)manifold chart $(U_\kappa, \kappa)$ on $M$ is an open set $U_\kappa \subseteq M$ together with a homeomorphism $\kappa \colon U_\kappa \rightarrow V_\kappa \subseteq E$ onto an open subset of $E$.
Two such charts are called $C^r$-compatible for $r \in \NN_0 \cup \{\infty,\omega\}$ if the change of charts map $\nu^{-1} \circ \kappa \colon \kappa (U_\kappa \cap U_\nu) \rightarrow \nu (U_\kappa \cap U_\nu)$ is a $C^r$-diffeomorphism.
A $C^r$-atlas of $M$ is a set of pairwise $C^r$-compatible manifold charts, whose domains cover $M$. Two such $C^r$-atlases are equivalent if their union is again a $C^r$-atlas.

A \emph{locally convex $C^r$-manifold} $M$ modelled on $E$ is a Hausdorff space $M$ with an equivalence class of $C^r$-atlases of ($E$-)manifold charts.
\end{defn}

 Direct products of locally convex manifolds, tangent spaces and tangent bundles as well as $C^r$-maps of manifolds may be defined as in the finite dimensional setting (see \cite[I.3]{MR2261066}).
 The advantage of this construction is that we can now give a very simple answer to the question, what an infinite-dimensional Lie group is:

\begin{defn}
A (locally convex) \emph{Lie group} is a group $G$ equipped with a $C_\KK^\infty$-manifold structure modelled on a locally convex space, such that the group operations are smooth.
If the manifold structure and the group operations are in addition ($\KK$-) analytic, then $G$ is called a ($\KK$-) \emph{analytic} Lie group.
\end{defn}

We recommend \cite{MR2261066} for a survey on the theory of locally convex Lie groups.
However, the Lie groups constructed in this article have strong structural properties as they belong to the class of Baker--Campbell--Hausdorff--Lie groups.
\begin{defn}[Baker--Campbell--Hausdorff (BCH-)Lie groups and Lie algebras] \label{defn: BCH}\mbox{}
\begin{enumerate}
 \item A Lie algebra $\mathfrak{g}$ is called
\emph{Baker--Campbell--Hausdorff--Lie algebra} (BCH--Lie algebra) if there
exists an open $0$-neighbourhood $U \subseteq \mathfrak{g}$ such that for $x, y
\in U$ the \emph{BCH-series} $\sum_{n=1}^\infty H_n (x,y)$ converges and
defines an analytic map $U \times U \rightarrow \mathfrak{g}$.
The $H_n$ are defined as $H_1 (x,y) = x +y$, $H_2 (x,y) = \frac{1}{2}\LB[x,y]$
and for $n\geq 3$ by linear combinations of iterated brackets, see
\cite[Definition IV.1.5.]{MR2261066} or \cite[Chapter 2, \S 6]{MR1728312}.
  \item A locally convex Lie group $G$ is called \emph{BCH--Lie group} if it satisfies one of the following equivalent conditions (cf.\  \cite[Theorem IV.1.8]{MR2261066})
    \begin{enumerate}[\textup (i)]
     \item $G$ is a $\KK$-analytic Lie group whose Lie group exponential function is $\KK$-analytic and a local diffeomorphism in $0$.
     \item The exponential map of $G$ is a local diffeomorphism in $0$ and $\Lf (G)$ is a BCH--Lie algebra.
    \end{enumerate}
\end{enumerate}
\end{defn}

BCH--Lie groups share many of the structural properties of Banach Lie groups while not necessarily being Banach Lie groups themselves.

 \section{Appendix: Characters and the exponential map}
  Fix for the rest of this section a $\KK$-Hopf algebra $\Hopf=(\Hopf,m_\Hopf,u_\Hopf,\Delta_\Hopf,\varepsilon_\Hopf,S_\Hopf)$ and a commutative continuous inverse algebra $B$.
  Furthermore, we assume that the Hopf algebra $\Hopf$ is graded, i.e.\ $\Hopf = \bigoplus_{n \in \NN_0} \Hopf$ and $\text{dim }\Hopf_0 < \infty$.
  The aim of this section is to prove that the Lie group exponential map $\exp_{A^\times}$ of $A := \Hom_\KK (\Hopf, B)$ restricts to a bijection from the infinitesimal characters to the characters.

 \begin{rem}[Cocomposition with Hopf multiplication]											\label{setup: dual_comult}
  Let $\Hopf \otimes \Hopf$ be the tensor Hopf algebra (cf.\ \cite[p. 8]{MR1381692}).
  We regard the tensor product $\Hopf\otimes \Hopf$ as a graded and connected Hopf algebra with the tensor grading, i.e.\ $\Hopf \otimes \Hopf = \bigoplus_{n \in \NN_0} (\Hopf \otimes \Hopf)_n$ where for all $n \in \NN_0$ the $n$th degree is defined as $(\Hopf \otimes \Hopf)_n = \bigoplus_{i+j = n} \Hopf_i \otimes \Hopf_j$.

  Since $\text{dim }\Hopf_0 < \infty$ we see that $\text{dim }(\Hopf \otimes \Hopf)_0 = \text{dim } \Hopf_0 \otimes \Hopf_0 < \infty$
  Thus with respect to the topology of pointwise convergence and the convolution product, the algebras
   \begin{align*}
    A := \Hom_\KK (\Hopf,B) \quad \quad A_{\otimes} := \Hom_\KK (\Hopf \otimes \Hopf , B)
   \end{align*}
   become continuous inverse algebras (see \ref{setup: graded} and Lemma \ref{lem: cia0}).
  This structure turns
  \begin{displaymath}
   \cdot \circ m_\Hopf \colon \Hom_\KK(\Hopf,B \rightarrow \Hom_\KK(\Hopf\otimes \Hopf,B), \quad \phi \mapsto \phi\circ m_\Hopf.
  \end{displaymath}
  into a continuous algebra homomorphism. Hence its restriction
  \begin{equation}\label{eq: precomp:mH}
   (m_\Hopf)^* \colon A^\times \rightarrow A_{\otimes}^\times , \quad \phi \mapsto \phi \circ m_\Hopf
  \end{equation}
 is a Lie group morphism with $\Lf ((m_\Hopf)^*) := T_e (m_\Hopf)^* = \cdot \circ m_\Hopf$.
 \end{rem}

 \begin{lem}\label{lem: exp:bij}
  The Lie group exponential $\exp_{A} \colon \Lf (A^\times) = A \rightarrow A^\times,\ x \mapsto \sum_{n=0}^\infty \frac{x^{\star_n}}{n!}$ maps $\chA{\Hopf}{B}$ to $\chG{\Hopf}{B}$.
  Further, there is a $0$-neighborhood $\Omega \subseteq A$ such that $\exp_{A}$ maps $\chA{\Hopf}{B} \cap \Omega$ bijectively onto $\chG{\Hopf}{B} \cap \exp_{A} (\Omega)$.
  \footnote{Note that apart from the locallity and several key arguments, the proof follows the general idea of the similar statement \cite[Lemma B.10]{BDS16}. For the readers convenience we repeat the arguments to exhibit how properties of the Lie group exponential replace the functional calculus used in \cite{BDS16}.}
 \end{lem}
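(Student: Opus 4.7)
The plan is to leverage the two Lie group morphisms $A^\times \to A_\otimes^\times$ that were already isolated in the proof of Theorem \ref{thm: Char_regular}, namely $(m_\Hopf)^* \colon \phi \mapsto \phi \circ m_\Hopf$ and $\theta \colon \phi \mapsto m_B \circ (\phi \otimes \phi)$, and then appeal to naturality of the Lie group exponential. The first task is to compute the differentials at the identity. Clearly $\Lf((m_\Hopf)^*)(\phi) = \phi \circ m_\Hopf$. For $\theta$, differentiating $\theta(u_B\varepsilon_\Hopf + t\phi)$ in $t$ at $0$ and evaluating at $a \otimes b \in \Hopf\otimes \Hopf$ gives
\[
\Lf(\theta)(\phi)(a\otimes b) \;=\; \phi(a)\,\varepsilon_\Hopf(b) + \varepsilon_\Hopf(a)\,\phi(b).
\]
Comparing these two formulae with Definition \ref{defn: char}, we see that $\phi$ is an infinitesimal character if and only if $\Lf((m_\Hopf)^*)(\phi) = \Lf(\theta)(\phi)$, while $\psi$ is a character if and only if $(m_\Hopf)^*(\psi) = \theta(\psi)$. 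This dictionary is the whole point: membership in $\chA{\Hopf}{B}$ and in $\chG{\Hopf}{B}$ are \emph{both} expressed as the equalizer of the corresponding pair of Lie-theoretic maps.

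For the first assertion, let $\phi \in \chA{\Hopf}{B}$. Since $(m_\Hopf)^*$ and $\theta|_{A^\times}$ are Lie group morphisms into the BCH-Lie group $A_\otimes^\times$, naturality of $\exp$ yields
\[
(m_\Hopf)^*(\exp_A\phi) = \exp_{A_\otimes}\!\bigl(\Lf((m_\Hopf)^*)\phi\bigr) = \exp_{A_\otimes}\!\bigl(\Lf(\theta)\phi\bigr) = \theta(\exp_A\phi),
\]
which is exactly the statement that $\exp_A\phi$ is a character.

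For the converse direction on a neighborhood, the key is that both $\exp_A$ and $\exp_{A_\otimes}$ are local diffeomorphisms at $0$ (Proposition \ref{prop: unit group Lie group}). Choose an open $0$-neighborhood $\widetilde{\Omega} \subseteq A_\otimes$ on which $\exp_{A_\otimes}$ is injective, and then, using continuity of the linear maps $\Lf((m_\Hopf)^*)$ and $\Lf(\theta)$, shrink an $\exp_A$-injective neighborhood $\Omega \subseteq A$ so that $\Lf((m_\Hopf)^*)(\Omega), \Lf(\theta)(\Omega) \subseteq \widetilde{\Omega}$. If now $\psi = \exp_A(\phi) \in \chG{\Hopf}{B} \cap \exp_A(\Omega)$ with $\phi \in \Omega$, then $(m_\Hopf)^*(\psi) = \theta(\psi)$ translates via the naturality identities above into
\[
\exp_{A_\otimes}\!\bigl(\Lf((m_\Hopf)^*)\phi\bigr) = \exp_{A_\otimes}\!\bigl(\Lf(\theta)\phi\bigr),
\]
and injectivity of $\exp_{A_\otimes}$ on $\widetilde{\Omega}$ forces $\Lf((m_\Hopf)^*)(\phi) = \Lf(\theta)(\phi)$, i.e. $\phi \in \chA{\Hopf}{B}$. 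Combined with the inclusion already proved, $\exp_A$ restricts to a bijection $\chA{\Hopf}{B} \cap \Omega \to \chG{\Hopf}{B} \cap \exp_A(\Omega)$.

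The main technical obstacle is bookkeeping rather than mathematical depth: one must verify that $\theta$ does restrict to a bona fide Lie group morphism $A^\times \to A_\otimes^\times$ (so that naturality of $\exp$ applies) and that the differential computation is compatible with the identity $u_B \circ \varepsilon_\Hopf$ in $A$. Both were essentially done in the proof of Theorem \ref{thm: Char_regular}, so they can be quoted. The only genuinely delicate point is to coordinate the sizes of the neighborhoods $\Omega$ and $\widetilde{\Omega}$ so that the injectivity of $\exp_{A_\otimes}$ can be invoked on the images of both linear maps simultaneously; this is done by ordinary continuity.
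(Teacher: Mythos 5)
Your argument is correct and is, at its core, the same as the paper's: both proofs realise $\chG{\Hopf}{B}$ and $\chA{\Hopf}{B}$ as equalizers of a pair of maps into $A_\otimes^\times$ (resp.\ $A_\otimes$), push the infinitesimal identity through the Lie group exponential by naturality, and recover the local converse from injectivity of $\exp_{A_\otimes}$ near $0$. The one genuine difference is in how you handle the second map: you work directly with the quadratic Lie group morphism $\theta(\phi)=m_B\circ(\phi\otimes\phi)$ and its differential $\Lf(\theta)(\phi)=\phi\td 1_A+1_A\td\phi$, and apply naturality of $\exp$ to $\theta$ in one stroke. The paper instead splits $\theta$ into the two commuting \emph{linear} algebra morphisms $\beta(\cdot,1_A)$ and $\beta(1_A,\cdot)$, uses that $\phi\td 1_A$ and $1_A\td\phi$ commute so that $\exp_{A_\otimes}$ of their sum factors as a product (equation \eqref{eq: com:ptm}), and only then applies naturality to each factor. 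Your route is slightly cleaner (no commuting-elements step, and the neighbourhood $\Omega$ only has to push $\Lf(\theta)(\Omega)$ and $\Lf((m_\Hopf)^*)(\Omega)$ into a single injectivity neighbourhood, rather than arranging $\Omega_\otimes+\Omega_\otimes\subseteq V_\otimes$ as the paper does); the price is that you must actually verify that the nonlinear map $\theta$ restricts to a smooth group morphism $A^\times\to A_\otimes^\times$ and compute its tangent map at $1_A$, whereas the paper only ever exponentiates continuous linear algebra morphisms.

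One bookkeeping caveat: you propose to quote the proof of Theorem \ref{thm: Char_regular} for the facts that $\theta$ is a Lie group morphism and that $\beta$ is continuous bilinear, but in the paper the logical order is the reverse — equations \eqref{eq: beta} and \eqref{eq: multifalt} are \emph{established inside} the proof of Lemma \ref{lem: exp:bij} and only later reused in Theorem \ref{thm: Char_regular}. So you should verify \eqref{eq: multifalt} and the multiplicativity of $\theta$ directly here (it is a one-line computation from \eqref{eq: multifalt}) rather than cite the later theorem. This is a presentational point, not a mathematical gap.
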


 \begin{proof}
  We denote by $\star_{A_\otimes}$ the convolution product of the CIA $A_{\otimes}$ and let $\exp_{A_{\otimes}}$ be the Lie group exponential of $A_{\otimes}^\times$.
  Let $m_B \colon B\otimes B \rightarrow B, \ b_1\otimes b_2 \mapsto b_1\cdot b_2$ be the multiplication in $B$.
  Define the continuous bilinear map (cf.\ \cite[proof of Lemma B.10]{BDS16} for detailed arguments)
  \begin{equation}\label{eq: beta}
   \beta \colon A \times A \rightarrow A_{\otimes}, \quad (\phi,\psi) \mapsto \phi \td \psi :=  m_B \circ (\phi \otimes \psi).
  \end{equation}
  We may use $\beta$ to rewrite the convolution in $A$ as $\star_A = \beta \circ \Delta$ and obtain
  \begin{equation}\label{eq: multifalt}
   (\phi_1 \td \psi_1 )\star_{A_\otimes} (\phi_2 \td \psi_2) = (\phi_1\star_A \phi_2) \td (\psi_1 \star_A \psi_2).
  \end{equation}
  Recall, that $1_A:= u_B\circ\varepsilon_\Hopf$ is the neutral element of the algebra $A$.
  From equation \eqref{eq: multifalt}, it follows at once, that the continuous linear maps
  \begin{displaymath}%\label{eq:alg:hom}
   \beta (\cdot, 1_A)  \colon A \rightarrow A_{\otimes}, \quad \phi \mapsto \phi \td 1_A \quad \text{ and } \quad \beta (1_A, \cdot) \colon A \rightarrow A_{\otimes}, \quad \phi \mapsto 1_A \td \phi
  \end{displaymath}
  are algebra homomorphisms which restrict to Lie group morphisms
   \begin{equation}\label{eq:LG:hom}
    \beta^\rho \colon A^\times \rightarrow A_{\otimes}^\times ,\quad \phi \mapsto \beta (\phi,1_A) \text{ and } \beta^\lambda \colon A^\times \rightarrow A_{\otimes}^\times ,\quad \phi \mapsto \beta (1_A,\phi)
   \end{equation}
  with $\Lf (\beta^\rho) =  \beta (\cdot, 1_A)$ and $\Lf (\beta^\lambda) =  \beta (1_A, \cdot)$.
  Let $\phi\in A$ be given and recall from \eqref{eq: multifalt} that $(\phi \td 1_A) \star_{A_\otimes} (1_A \td \phi) = \phi \td \phi =  (1_A \td \phi) \star_{A_\otimes}  (\phi \td 1_A)$.
  As a consequence we obtain
      \begin{equation}\label{eq: com:ptm}
       \exp_{A_{\otimes}} (\phi \td 1_A + 1_A \td \phi) = \exp_{A_{\otimes}} (\phi \td 1_A) \star_{A_\otimes} \exp_{A_{\otimes}} (1_A \td \phi).
      \end{equation}
  since every Lie group exponential function transforms addition into multiplication for commuting elements.

  Note that it suffices to check multiplicativity of $\exp_A (\phi)$ as $\exp_{A}(\phi)(1_\Hopf)=1_B$ is automatically satisfied.
  For an infinitesimal character $\phi \in A$ we have by definition $\phi \circ m_\Hopf = \phi \td 1_A + 1_A \td \phi$. Using now the naturality of the Lie group exponentials (i.e.\ for a Lie group morphism $f \colon G \rightarrow H$ we have $\exp_H \circ \Lf (f) = f\circ \exp_G$), we derive the following: \allowdisplaybreaks
  \begin{align*}
	\phi 	\in \chA{\Hopf}{B} 	& \stackrel{\text{Def}}{\iff} \phi \circ m_\Hopf = \phi \td 1_A + 1_A \td \phi\\
						& \stackrel{\, (!)}{\ \, \Longrightarrow}     \exp_{A_{\otimes}}(\phi \circ m_\Hopf)  	 = 	\exp_{A_{\otimes}}(\phi \td 1_A + 1_A \td \phi)						\\
							& \stackrel{\eqref{eq: com:ptm}}{\iff} \exp_{A_{\otimes}}(\phi \circ m_\Hopf)  	 = 	\exp_{A_\otimes}(\phi \td 1_A) \star_{A_\otimes} \exp_{A_\otimes}(1_A \td \phi)		\\
							&\stackrel{\eqref{eq:LG:hom}}{\iff} \exp_{A_{\otimes}}(\phi \circ m_\Hopf)  	 = 	\bigl( \exp_{A}(\phi) \td 1_A  \bigr)\star_{A_\otimes} \bigl(1_A \td \exp_{A}(\phi) \bigr) 		\\
							&\stackrel{\eqref{eq: multifalt}}{\iff} \exp_{A_{\otimes}}(\phi \circ m_\Hopf)  	 = 	\bigl( \exp_{A}(\phi)\star_{A} 1_A \bigr) \td \bigl(1_A  \star_{A} \exp_{A}(\phi) \bigr)	 \\
							&\iff \exp_{A_{\otimes}}(\phi \circ m_\Hopf)  	 = 	\exp_{A}(\phi) \td  \exp_{A}(\phi) 							\\
						&\stackrel{\eqref{eq: precomp:mH}}{\iff}
						      \exp_{A}(\phi) \circ m_\Hopf  		 = 	\exp_{A}(\phi) \td  \exp_{A}(\phi)	\\
						&\stackrel{\text{Def}}{\iff}
						      \exp_{A}(\phi) \in \chG{\Hopf}{B} .
  \end{align*}
  This shows that infinitesimal characters are mapped by the Lie group exponential to elements in the character group.

  Now we observe that in general the implication from the first to the second line will not be an equivalence (as the Lie group exponential is not a global diffeomorphism unlike the connected Hopf algebra case discussed in \cite[Lemma B.10]{BDS16}).
  We exploit now that $A^\times$ and $A_{\otimes}^\times$ are locally exponential Lie groups, whence locally around $0$ in $A$ and $A_{\otimes}$ the Lie group exponentials induce diffeomorphisms.
  Hence there are open neighborhoods of $0$ and the units of $A^\times$ and $A_{\otimes}^\times$, such that $\exp_{A^\times} \colon A \supseteq V \rightarrow W \subseteq A^\times$ and $\exp_{A_{\otimes}^\times} \colon A_{\otimes} \supseteq V_\otimes \rightarrow W_{\otimes} \subseteq A_{\otimes}^\times$ are diffeomorphisms.
  Since $A_{\otimes}$ is a locally convex space, there is an open $0$-neighborhood $\Omega_\otimes \subseteq A_{\otimes}$ such that $\Omega_\otimes + \Omega_\otimes \subseteq V_\otimes$.
  By continuity of $\beta$ and $\cdot \otimes m_\Hopf$, we obtain an open open $0$-neighborhood
  $$\Omega := V \cap (\cdot \otimes m_\Hopf)^{-1} (V_\otimes) \cap \beta (\cdot , 1_A)^{-1} (\Omega_\otimes) \cap \beta (1_{A},\cdot)^{-1} (\Omega_\otimes) \subseteq A.$$
  Now by construction elements in $\chA{\Hopf}{B} \cap \Omega$ are mapped by $\cdot \circ m_\Hopf$ into $V_\times$ and by $\beta (\cdot , 1_A) + \beta (1_A,\cdot)$ into $\Omega_\otimes + \Omega_\otimes \subseteq V_\otimes$.
  Since $\exp_{A_{\otimes}}$ induces a diffeomorphism on $V_\otimes$, the implication $(!)$ becomes an equivalence for elements in $\chA{\Hopf}{B} \cap \Omega$. We have thus established that $\exp_{A}$ maps $\chA{\Hopf}{B} \cap \Omega$ bijectively to $\exp_{A}(\Omega) \cap \chG{\Hopf}{B}$.
\end{proof}

% For BibTeX add the following
\bibliographystyle{spmpsci}
\bibliography{Abel} % When your bib file is bibfile.bib. Replace
                        % with your file name without file ending.
\end{document}